\newtheorem*{rep@theorem}{\rep@title}
\newcommand{\newreptheorem}[2]{
\newenvironment{rep#1}[1]{
 \def\rep@title{#2 \ref{##1}}
 \begin{rep@theorem}}
 {\end{rep@theorem}}}
\theoremstyle{plain}
\newtheorem{thm}{Theorem}[section]
\newtheorem{lem}[thm]{Lemma}
\newtheorem{conjecture}[thm]{Conjecture}
\newtheorem{cor}[thm]{Corollary}
\newtheorem{observation}{Observation}
\newtheorem*{mainconj}{Main Conjecture}
\newtheorem*{mainthm}{Main Theorem}
\theoremstyle{definition}
\newtheorem{defn}{Definition}
\theoremstyle{remark}
\newcommand{\fancy}[1]{\mathcal{#1}}
\newcommand{\set}[1]{\left\{ #1 \right\}}
\newcommand{\card}[1]{\left|#1\right|}
\newcommand{\ceil}[1]{\left\lceil#1\right\rceil}
\newcommand{\floor}[1]{\left\lfloor#1\right\rfloor}
\newcommand{\irange}[1]{\left[#1\right]}
\newcommand{\parens}[1]{\left( #1 \right)}
\newcommand{\brackets}[1]{\left[ #1 \right]}
\newcommand{\DefinedAs}{\mathrel{\mathop:}=}
\def\Q{\fancy{Q}}
\def\Z{\fancy{Z}}
\begin{document}
\title{
A note on coloring vertex-transitive graphs
}
\author{Daniel W. Cranston\thanks{Department of Mathematics and Applied
Mathematics, Virginia Commonwealth University, Richmond, VA, 23284. email:
\texttt{dcranston@vcu.edu}} \and 
Landon Rabern\thanks{Lancaster, PA, 17601.  email:
\texttt{landon.rabern@gmail.com}. 
}}
\maketitle
\date

\begin{abstract}
We prove bounds on the chromatic number $\chi$ of a vertex-transitive graph in
terms of its clique number $\omega$ and maximum degree $\Delta$.  We conjecture
that every vertex-transitive graph satisfies $\chi \le \max \set{\omega,
\ceil{\frac{5\Delta + 3}{6}}}$ and we prove results supporting this conjecture. 
Finally, for vertex-transitive graphs with $\Delta \ge 13$
we prove the Borodin-Kostochka conjecture, i.e.,
$\chi\le\max\set{\omega,\Delta-1}$.
\end{abstract}

\section{Introduction}
Many results and conjectures in the graph coloring literature have the
form: \emph{if the chromatic number $\chi$ of a graph is close to its maximum
degree $\Delta$,
then the graph contains a big clique, i.e., $\omega$ is large}
(\cite{brooks1941colouring,
borodin1977upper, reed1999strengthening, reed1998omega, CatlinAnotherBound,
molloy2002graph}). Generically, we call conjectures of this sort \emph{big
clique conjectures}.  In
\cite{denseneighborhoods}, it was shown that many big clique conjectures 
hold under the added hypothesis that every vertex is in a medium sized clique. 
Partial results on big clique conjectures often guarantee a medium sized clique,
but not a big clique.  But in a vertex-transitive graph, the existence of one
medium sized clique implies that every vertex is in a medium sized clique.
By applying the idea in \cite{denseneighborhoods}, we now get a big clique. 
So, in essence, partial results on big clique conjectures are
self-strengthening in the class of vertex-transitive graphs.

In this short note, we give some examples of this phenomenon.  There is not
much new graph theory here, just combinations of known results that yield facts
we did not know.  The following conjecture is the best we could hope for. 
A good deal of evidence supports it, as we will detail below.

\begin{mainconj}
If $G$ is vertex-transitive, then $\chi(G) \le \max \set{\omega(G), \ceil{\frac{5\Delta(G) + 3}{6}}}$.
\end{mainconj}

Our Main Conjecture would be best possible, as shown by Catlin's
counterexamples to the Haj{\'o}s conjecture \cite{catlin1979hajos}.  Catlin
computed the chromatic number of line graphs of odd cycles where each edge has
been duplicated $k$ times; in particular, he showed that $\chi(G_{t,k}) = 2k +
\ceil{\frac{k}{t}}$ for $t \ge 2$, where $G_{t,k} \DefinedAs L(kC_{2t+1})$.
Since $\Delta(G_{t,k}) = 3k-1$ and $\omega(G_{t,k}) = 2k$, we have
$\chi(G_{2, k}) = 2k+\ceil{\frac{k}2}=\ceil{\frac{5k}2} = 
\ceil{\frac{15k-2}6} =
\max \set{\omega(G_{2,k}), \ceil{\frac{5\Delta(G_{2,k}) + 3}{6}}}$ for
all $k \ge 1$.

The Main Conjecture does not hold for graphs in general.  To see this, let $H_t$ be $K_t$ joined to a $5$-cycle. 
For any $c \ge 2$, we can make $t$ large enough so that $\chi(H_t) > \max\set{\omega(H_t), \Delta(H_t) - c}$ and $\Delta(H_t)$ is as large as we like.  
So, no bound of this form can hold in general, not even for claw-free graphs (since $H_t$ is claw-free). In \cite{rabern2011strengthening}, it was shown that a bound of this form does hold for line graphs of multigraphs.  In particular, they satisfy $\chi \le \max \set{\omega, \frac{7\Delta + 10}{8}}$.  The bound in the Main Conjecture would be best possible for line graphs of multigraphs as well.

Our main result is the following weakening of the Borodin-Kostochka conjecture
for vertex-transitive graphs, which we prove in Section~\ref{BK}.  This theorem
likely holds for all $\Delta \ge 9$ and proving this may be a good deal easier
than proving the full Borodin-Kostochka conjecture (note that the Main
Conjecture implies the Main Theorem for all $\Delta \ge 9$).

\begin{mainthm}\label{BKTransitive}
If $G$ is vertex-transitive with $\Delta(G) \ge 13$ and $K_{\Delta(G)} \not \subseteq G$, then $\chi(G) \le \Delta(G) - 1$.
\end{mainthm}

As further evidence for the Main Conjecture, we show that the analogous upper
bound holds for the fractional chromatic number.  Also, we show that the Main
Conjecture is true if all vertex-transitive graphs satisfy both Reed's
$\omega$, $\Delta$, and $\chi$ conjecture and the strong $2\Delta$-colorability
conjecture (see \cite{aharoni2007independent}; really we can get by with
$\frac52\Delta$-colorability).  Finally, we show the following.

\begin{thm}
There exists $c < 1$, such that for any vertex-transitive graph $G$, we have $\chi(G) \le \max \set{\omega(G), c(\Delta(G) + 1)}$.
\end{thm}

\section{Clustering of maximum cliques}
Before coloring anything, we need a better understanding of the structure of maximum cliques in a graph.
\subsection{The clique graph}
\begin{defn}
Let $G$ be a graph. For a collection of cliques $\Q$ in $G$, let $X_\Q$ be the intersection graph of $\Q$; that is, the vertex set of $X_\Q$ is $\Q$ and there is an edge between $Q_1, Q_2 \in \Q$ iff $Q_1 \ne Q_2$ and $Q_1$ and $Q_2$ intersect.
\end{defn}

When $\Q$ is a collection of maximum cliques, we get a lot of information about $X_\Q$.  Kostochka \cite{kostochkaRussian} used the following lemma of Hajnal \cite{HajnalSaturation} to show that the components of $X_\Q$ are complete in a graph with $\omega > \frac23 (\Delta + 1)$.

\begin{lem}[Hajnal \cite{HajnalSaturation}]\label{HajnalLemma}
If $G$ is a graph and $\Q$ is a collection of maximum cliques in $G$, then 
\[\card{\bigcup \Q} + \card{\bigcap \Q} \geq 2\omega(G).\]
\end{lem}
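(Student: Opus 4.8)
The plan is to induct on $\card{\Q}$, the number of cliques in the family. When $\card{\Q} = 1$, say $\Q = \set{Q}$, both $\bigcup \Q$ and $\bigcap \Q$ equal $Q$, so the left-hand side is $2\card{Q} = 2\omega(G)$ and equality holds. For the inductive step, fix $Q_0 \in \Q$ and set $\Q' = \Q \setminus \set{Q_0}$, $A = \bigcup \Q'$, and $B = \bigcap \Q'$. Note that $B \subseteq A$ and that $B$, being an intersection of cliques, is itself a clique; the inductive hypothesis gives $\card{A} + \card{B} \geq 2\omega(G)$. Since $\bigcup \Q = A \cup Q_0$ and $\bigcap \Q = B \cap Q_0$, an elementary count gives
\[
\card{\bigcup \Q} + \card{\bigcap \Q} = \card{A} + \card{B} + \card{Q_0 \setminus A} - \card{B \setminus Q_0},
\]
so, using the inductive hypothesis, it suffices to prove $\card{Q_0 \setminus A} \geq \card{B \setminus Q_0}$.

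To establish this inequality I would introduce $P = \set{u \in Q_0 : u \adj v \text{ for all } v \in B \setminus Q_0}$, the set of vertices of $Q_0$ adjacent to every vertex of $B \setminus Q_0$. The first point is that $P \cup (B \setminus Q_0)$ is a clique: $P \subseteq Q_0$ and $B \setminus Q_0 \subseteq B$ are each cliques, the two sets are disjoint, and every vertex of $P$ is adjacent to every vertex of $B \setminus Q_0$ by definition; hence $\card{P} + \card{B \setminus Q_0} \leq \omega(G) = \card{Q_0}$, i.e.\ $\card{Q_0 \setminus P} \geq \card{B \setminus Q_0}$. The second point is that $Q_0 \cap A \subseteq P$: if $u \in Q_0$ also lies in some $Q' \in \Q'$, then, since $B \setminus Q_0 \subseteq B \subseteq Q'$ and $Q'$ is a clique containing $u$ but disjoint from $B \setminus Q_0$, the vertex $u$ is adjacent to all of $B \setminus Q_0$, so $u \in P$. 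Combining the two points, $Q_0 \setminus A \supseteq Q_0 \setminus P$, so $\card{Q_0 \setminus A} \geq \card{Q_0 \setminus P} \geq \card{B \setminus Q_0}$, which closes the induction.

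There is not much to get stuck on once the bookkeeping is in place; the only genuinely creative step is guessing the auxiliary clique $P \cup (B \setminus Q_0)$ together with the reformulation of the target as $\card{Q_0 \setminus A} \geq \card{B \setminus Q_0}$. It is worth noting where maximality enters: it is used only through the identity $\card{Q_0} = \omega(G)$ (combined with the trivial fact that every clique has at most $\omega(G)$ vertices), and indeed the statement is false for families of cliques of a common but non-maximum size, so this use is essential. The degenerate situations ($\card{\Q} \leq 1$, or $B \setminus Q_0 = \emptyset$) cause no trouble, as they are already handled by the computations above.
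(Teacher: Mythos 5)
Your proof is correct and is exactly the "easy induction" the paper alludes to (the paper itself omits the details): peel off one clique $Q_0$, reduce to $\card{Q_0 \setminus A} \ge \card{B \setminus Q_0}$, and win by exhibiting the auxiliary clique $P \cup (B \setminus Q_0)$ and comparing it to the maximum clique $Q_0$. The only blemish is the phrase ``$Q'$ is a clique containing $u$ but disjoint from $B \setminus Q_0$,'' which should read that $u$ itself lies outside $B \setminus Q_0$ (because $u \in Q_0$) while $Q'$ contains both $u$ and $B \setminus Q_0$; the surrounding argument makes the intended meaning clear.
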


Hajnal's lemma follows by an easy induction.  The proof of Kostochka's lemma in
\cite{kostochkaRussian} is in Russian; for a reproduction of his original proof
in English, see \cite{rabernhitting}.  Below we give a shorter proof from
\cite{raberndiss}.

\begin{lem}[Kostochka \cite{kostochkaRussian}]\label{KostochkaCliqueGraph}
If $\Q$ is a collection of maximum cliques in a graph $G$ with $\omega(G) > \frac23 (\Delta(G) + 1)$ such that $X_\Q$ is connected, then $\cap \Q \neq \emptyset$. 
\end{lem}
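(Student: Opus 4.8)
The plan is to induct on $\card{\Q}$, using Hajnal's Lemma (Lemma~\ref{HajnalLemma}) as the quantitative engine and the connectivity of $X_\Q$ as the combinatorial scaffold. The base case $\card{\Q} = 1$ is trivial since then $\cap \Q = \bigcup \Q \ne \emptyset$. For the inductive step, suppose $\card{\Q} \ge 2$ and $X_\Q$ is connected. The first move is to find a vertex $Q_0$ of $X_\Q$ whose removal leaves $X_{\Q'}$ connected, where $\Q' \DefinedAs \Q \setminus \set{Q_0}$; any non-cut-vertex of the connected graph $X_\Q$ works, e.g.\ a leaf of a spanning tree. By induction, $\bigcap \Q' \ne \emptyset$; call this common set $T$, so $T \subseteq Q$ for every $Q \in \Q'$ and $\card{T} \ge 1$.

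Now the task reduces to showing $Q_0 \supseteq T$, or at least that $Q_0$ meets $T$ — in fact we want $\bigcap \Q = Q_0 \cap T \ne \emptyset$. Since $X_\Q$ is connected, $Q_0$ intersects some $Q_1 \in \Q'$; pick a vertex $v \in Q_0 \cap Q_1$. The idea is to apply Hajnal's Lemma to a cleverly chosen subcollection. Consider $\Q'' \DefinedAs \set{Q_0, Q_1}$ together with enough of $\Q'$ to control $\bigcup$: more carefully, I would look at $\Q$ itself and use $\bigcup \Q \supseteq \parens{\bigcup \Q'} \cup Q_0 \supseteq T \cup \parens{Q_1 \setminus T} \cup \parens{Q_0 \setminus Q_1}$, and count. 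Every vertex of $\bigcup \Q'$ lies in $Q_1$ (a maximum clique has size $\omega$, and $T \subseteq Q_1$), so the vertices of $\bigcup \Q'$ outside $Q_1$ number at most... here is where one has to be precise: write $\card{\bigcup \Q'} = \omega + e'$ where $e' \ge 0$ counts the "excess" beyond a single maximum clique. Each vertex of $Q_0 \setminus Q_1$ contributes to $\bigcup \Q$ but not to $\bigcup \Q'$; there are $\card{Q_0 \setminus Q_1} = \omega - \card{Q_0 \cap Q_1}$ of them. So $\card{\bigcup \Q} \ge \omega + e' + \parens{\omega - \card{Q_0 \cap Q_1}}$. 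On the other hand, a vertex $u \in T$ lies in every clique of $\Q'$, hence $N[u] \supseteq \bigcup \Q'$, so $\Delta + 1 \ge \card{N[u]} \ge \card{\bigcup \Q'} = \omega + e'$, giving $e' \le \Delta + 1 - \omega$. Plugging in and using $\card{\bigcap \Q'} = \card{T}$, Hajnal's inequality $\card{\bigcup \Q} + \card{\bigcap \Q} \ge 2\omega$ forces a lower bound on $\card{Q_0 \cap T}$ that is positive precisely when $\omega > \frac{2}{3}(\Delta + 1)$ — this is the arithmetic that makes the hypothesis sharp, mirroring the $\frac23$ threshold in the statement.

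The main obstacle is getting the counting in the inductive step to close cleanly: one must be careful that the "excess" is bounded using a vertex that genuinely sees all of $\bigcup \Q'$, and that $Q_0$'s overlap with the already-merged part $T$ (not just with $Q_1$) is what Hajnal's Lemma delivers. The natural fix, if the two-clique application of Hajnal is too lossy, is to apply Lemma~\ref{HajnalLemma} directly to all of $\Q$ (or to $\Q' \cup \set{Q_0}$) so that $\bigcap \Q$ appears on the left-hand side, and combine it with the degree bound $\card{N[u]} \le \Delta + 1$ for $u \in T$ together with the inductive fact that $T \ne \emptyset$. I would also double-check the edge case where $Q_0 \cap T = \emptyset$ but $Q_0 \cap Q_1 \ne \emptyset$ with the intersection point outside $T$: the inequality above should rule this out under the hypothesis on $\omega$, and verifying that rigorously is the one place real care is needed.
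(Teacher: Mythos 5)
Your approach is the paper's, almost step for step: induct on $\card{\Q}$, remove a non-cut-vertex $Q_0$ of $X_\Q$, obtain $T = \bigcap \Q' \neq \emptyset$ by induction where $\Q' = \Q \setminus \set{Q_0}$ (hence $\card{\bigcup \Q'} \le \Delta+1$ via any $u \in T$), and then apply Lemma~\ref{HajnalLemma} to all of $\Q$; the paper phrases this as a minimal counterexample rather than explicit induction, but the content is identical. Two details in your write-up need attention. First, the key inequality should read $\card{\bigcup \Q} \le \card{\bigcup \Q'} + \card{Q_0 \setminus Q_1}$, not $\ge$; the justification is simply that $\bigcup \Q = \bigcup \Q' \cup Q_0$ and $Q_0 \cap Q_1 \subseteq Q_1 \subseteq \bigcup \Q'$, so $Q_0$ contributes at most $\card{Q_0 \setminus Q_1}$ new vertices. (Your stated reason --- that vertices of $Q_0 \setminus Q_1$ never appear in $\bigcup \Q'$ --- is false in general, since such a vertex could lie in some other clique of $\Q'$, but this does not matter because you only need the upper bound.) Second, and more substantively, to make the arithmetic close you must also show $\card{Q_0 \setminus Q_1} \le \Delta + 1 - \omega$: any $v \in Q_0 \cap Q_1$ (which exists since $Q_0 Q_1$ is an edge of $X_\Q$) satisfies $N[v] \supseteq Q_0 \cup Q_1$, so $2\omega - \card{Q_0 \cap Q_1} = \card{Q_0 \cup Q_1} \le \Delta + 1$. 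With both pieces in place, Hajnal's Lemma yields $\card{\bigcap \Q} \ge 2\omega - \card{\bigcup \Q} \ge 2\omega - (\Delta+1) - (\Delta + 1 - \omega) = 3\omega - 2(\Delta+1) > 0$, which is exactly the computation in the paper.
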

\begin{proof}
Suppose not and choose a counterexample $\Q \DefinedAs \set{Q_1, \ldots, Q_r}$
minimizing $r$. Plainly, $r \geq 3$. Let $A$ be a noncutvertex in $X_{\Q}$ and
$B$ a neighbor of $A$. Put $\fancy{Z} \DefinedAs \Q - \set{A}$. Then
$X_{\fancy{Z}}$ is connected and hence by minimality of $r$, $\cap \fancy{Z}
\neq \emptyset$. In particular, $\card{\cup \fancy{Z}} \leq \Delta(G) + 1$.
By assumption, $\cap\Q=\emptyset$, so $\card{\cap \Q} + \card{\cup \Q} \leq 0 +
(\card{\cup \fancy{Z}} + \card{A - B}) \leq
(\Delta(G) + 1) + (\Delta(G)+1 - \omega(G)) < 2\omega(G)$. This contradicts
Lemma \ref{HajnalLemma}.
\end{proof}

As shown by Christofides, Edwards and King \cite{christofides2012note}, components of $X_\Q$ have nice structure in the $\omega = \frac23 (\Delta + 1)$ case as well.  We'll need this stronger result to get our bounds on coloring vertex-transitive graphs to be tight.

\begin{lem}[Christofides, Edwards and King \cite{christofides2012note}]\label{TwoThirdsEqualityStructure}
If $\Q$ is a collection of maximum cliques in a graph $G$ with $\omega(G) \ge \frac23 (\Delta(G) + 1)$ such that $X_\Q$ is connected, then either 
\begin{itemize}
\item $\cap \Q \ne \emptyset$; or
\item $\Delta(X_\Q) \le 2$ and if $B, C \in \Q$ are different neighbors of $A \in \Q$, then $B \cap C = \emptyset$ and $\card{A \cap B} = \card{A \cap C} = \frac12 \omega(G)$.
\end{itemize}

\end{lem}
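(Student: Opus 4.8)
The plan is to first invoke Lemma~\ref{KostochkaCliqueGraph}: if $\omega(G) > \frac23(\Delta(G)+1)$, then since $X_\Q$ is connected we already have $\cap\Q\neq\emptyset$. So write $\omega = \omega(G)$ and $\Delta = \Delta(G)$; I may assume $\omega = \frac23(\Delta+1)$ (so $\Delta+1 = \frac32\omega$ and $\omega$ is even), and I assume $\cap\Q = \emptyset$, aiming to establish the second alternative. The workhorses are two local size estimates. First, if two distinct maximum cliques $P,Q$ meet, pick $v\in P\cap Q$; then $P\cup Q\subseteq N[v]$, so $\card{P\cup Q}\le\Delta+1 = \frac32\omega$ and hence $\card{P\cap Q}\ge\frac12\omega$. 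Second, if three maximum cliques $P,Q,R$ share a vertex $v$, then $\card{P\cup Q\cup R}\le\Delta+1 = \frac32\omega$, and Hajnal's Lemma~\ref{HajnalLemma} applied to $\set{P,Q,R}$ gives $\card{P\cap Q\cap R}\ge 2\omega - \frac32\omega = \frac12\omega$. I also use that distinct maximum cliques meet in at most $\omega-1$ vertices.

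Next I would reduce both bullet points to a single claim: \emph{if $B$ and $C$ are distinct neighbours of $A$ in $X_\Q$, then $B\cap C = \emptyset$}. Granting it: $\emptyset\neq A\cap C\subseteq A\setminus B$ (since $(A\cap C)\cap B\subseteq B\cap C = \emptyset$), so $\card{A\setminus B}\ge\card{A\cap C}\ge\frac12\omega$, whence $\card{A\cap B}\le\frac12\omega$; combined with the local bound, $\card{A\cap B} = \frac12\omega$, and symmetrically $\card{A\cap C} = \frac12\omega$. Also $A\cap B$ and $A\cap C$ are disjoint (as $A\cap B\cap C\subseteq B\cap C = \emptyset$), so they partition $A$ into two halves; hence $A$ can have no third neighbour $D$, since $A\cap D$ would be a nonempty subset of $A$ disjoint from both halves. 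This gives $\Delta(X_\Q)\le 2$ and the stated intersection sizes.

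It remains to prove the claim. Suppose $B,C$ are distinct neighbours of $A$ with $B\cap C\neq\emptyset$; by the local bound, $\card{B\cap C}\ge\frac12\omega$. If $A\cap B\cap C = \emptyset$, then $B\cap C\subseteq B\setminus A$ and $B\cap C\subseteq C\setminus A$, while $\card{B\setminus A},\card{C\setminus A}\le\frac12\omega$ (from the local bounds on $\card{A\cap B}$ and $\card{A\cap C}$); so equality holds throughout, giving $B\cap C = B\setminus A = C\setminus A$ and $\card{A\cap B} = \card{A\cap C} = \frac12\omega$, so $A\cap B$ and $A\cap C$ partition $A$; then any $u\in A\cap B$ lies in $A$ but not in $C$, is adjacent to all of $A\cap C$ (via $A$) and to all of $C\setminus A = B\cap C$ (via $B$), hence to all of $C$, so $C\cup\set{u}$ is a clique on $\omega+1$ vertices, a contradiction. \textbf{The remaining case, $A\cap B\cap C\neq\emptyset$, is the one I expect to be the main obstacle.} Here $\card{A\cap B\cap C}\ge\frac12\omega$ by the triple estimate, and I would rule it out by induction on $\card{\Q}$: dispose of a cutvertex of $X_\Q$ by passing to the strictly smaller families obtained by splitting at it, and when $X_\Q$ is $2$-connected use connectivity — walking a vertex of $A\cap B\cap C$ along paths in $X_\Q$ and applying the triple estimate repeatedly — to force some vertex into every clique of $\Q$, contradicting $\cap\Q = \emptyset$. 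As in the proof of Lemma~\ref{KostochkaCliqueGraph}, the delicate point is that here Hajnal's inequality is tight, so the argument must control its equality cases throughout.
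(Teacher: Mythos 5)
The paper itself does not prove this lemma; it simply cites Christofides, Edwards and King, so there is no in-paper proof to compare against. I will assess your attempt on its own merits.

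Your setup is sound: by Lemma~\ref{KostochkaCliqueGraph} you may assume $\omega = \frac23(\Delta+1)$ and $\cap\Q=\emptyset$, and your two local estimates (pairwise intersections of size at least $\frac12\omega$ for meeting maximum cliques, triple intersections of size at least $\frac12\omega$ when three maximum cliques share a vertex) are correct consequences of $|N[v]|\le\Delta+1=\frac32\omega$ together with Hajnal's inequality. Your reduction of the whole lemma to the single claim that distinct neighbours $B,C$ of $A$ satisfy $B\cap C=\emptyset$ is also correct: granted the claim, your derivation of $\card{A\cap B}=\card{A\cap C}=\frac12\omega$ and $\Delta(X_\Q)\le 2$ is tight and complete. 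Your Case~1 of the claim (the subcase $A\cap B\cap C=\emptyset$) is likewise a correct and clean argument: the forced equalities give $B\cap C=B\setminus A=C\setminus A$, the partition of $A$ into $A\cap B$ and $A\cap C$, and then any $u\in A\cap B$ is adjacent to all of $C=(A\cap C)\cup(C\setminus A)$, producing an $(\omega+1)$-clique.

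The gap is exactly where you flagged it: Case~2, $A\cap B\cap C\ne\emptyset$, is not proved. What you offer there is a plan, not an argument. The proposed induction on $\card{\Q}$ does not go through in the form sketched: removing a noncutvertex $D$ of $X_\Q$ and passing to $\Z=\Q-\set{D}$ only transfers the hypothesis ``three cliques of $\Z$ share a vertex'' when $D\notin\set{A,B,C}$ \emph{and} $\cap\Z=\emptyset$, and there is no guarantee either condition can be arranged (all noncutvertices could lie in $\set{A,B,C}$, or $\Z$ could have a common vertex, in which case the inductive hypothesis is vacuous for $\Z$). The ``walk a vertex of $A\cap B\cap C$ along paths'' idea is not well defined because, as you yourself observe, Hajnal's inequality is tight here, so the large intersection you carry can shift as you move along $X_\Q$ rather than accumulating into a common vertex of all of $\Q$; making this precise requires tracking the equality structure (for instance, that when $\cap\Z\ne\emptyset$ one gets $\card{\cup\Z}=\Delta+1$, $\card{\cap\Z}=\frac12\omega$, $\cap\Z$ disjoint from $D$, and $D\cap E=D\cap\cup\Z$ for every neighbour $E$ of $D$ in $\Z$), and that is the real content you would still need to supply. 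As it stands, the proof is incomplete.
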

\subsection{In vertex-transitive graphs}
Let $G$ be a vertex-transitive graph and let $\Q$ be the collection of all
maximum cliques in $G$.  It is not hard to see that $X_\Q$ is vertex-transitive
as well; in fact, we have the following.

\begin{observation}\label{transitiveClustering}
Let $G$ be a vertex-transitive graph and let $\Q$ be the collection of all
maximum cliques in $G$.  For each component $C$ of $X_\Q$, put $G_C \DefinedAs
G\brackets{\bigcup V(C)}$.  Then $G_C$ is vertex-transitive for each component
$C$ of $X_\Q$ and $G_{C_1} \cong G_{C_2}$ for components $C_1$ and $C_2$ of
$X_\Q$.
\end{observation}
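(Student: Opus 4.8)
The plan is to exploit the fact that the automorphism group $\Gamma \DefinedAs \mathrm{Aut}(G)$ acts on the set $\Q$ of all maximum cliques, and that this action descends to an action on the intersection graph $X_\Q$ by graph automorphisms. First I would verify that every $\gamma \in \Gamma$ maps a maximum clique to a maximum clique (since $\gamma$ preserves adjacency and $\omega$), that $\gamma$ preserves the relation ``$Q_1 \ne Q_2$ and $Q_1 \cap Q_2 \ne \emptyset$'' (since $\gamma$ is a bijection on $V(G)$ commuting with intersection), and hence that $\gamma$ induces an automorphism $\bar\gamma$ of $X_\Q$. So $\Gamma$ acts on $X_\Q$.

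Next I would establish vertex-transitivity of $X_\Q$. Given two maximum cliques $Q_1, Q_2 \in \Q$, pick vertices $v_1 \in Q_1$ and $v_2 \in Q_2$; by vertex-transitivity of $G$ there is $\gamma \in \Gamma$ with $\gamma(v_1) = v_2$. This does not immediately give $\bar\gamma(Q_1) = Q_2$, so here I would instead argue more carefully: actually transitivity of $X_\Q$ does not follow just from pointwise transitivity of $G$ in general (a vertex may lie in several maximum cliques). The cleaner route, and I suspect the one the authors take, is to observe that we only need the \emph{weaker} conclusions actually stated — namely that each $G_C$ is vertex-transitive and that all the $G_C$ are pairwise isomorphic — and these do follow. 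Since $\bar\gamma$ is an automorphism of $X_\Q$, it permutes the connected components of $X_\Q$; if $\bar\gamma(C_1) = C_2$ as components, then $\gamma$ restricts to an isomorphism of $G[\bigcup V(C_1)] = G_{C_1}$ onto $G[\bigcup V(C_2)] = G_{C_2}$, because $\gamma$ is an automorphism of $G$ carrying the vertex set $\bigcup_{Q \in C_1} Q$ bijectively onto $\bigcup_{Q \in C_2} Q$. So I need: (a) for any two components $C_1, C_2$ there is $\gamma$ with $\bar\gamma(C_1) = C_2$, and (b) for any single component $C$ and any two vertices $u, v \in \bigcup V(C)$ there is $\gamma$ fixing $C$ setwise with $\gamma(u) = v$.

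For (b), take $u, v \in V(G_C)$ and pick $\gamma \in \Gamma$ with $\gamma(u) = v$, using vertex-transitivity of $G$. Then $u$ lies in some $Q \in C$; since $u \in \gamma^{-1}(Q')$ for... — the issue is that $\gamma$ need not fix $C$. The fix: both $u$ and $v$ lie in maximum cliques, hence in $\bigcup \Q$, and $v = \gamma(u) \in \gamma(\bigcup V(C))= \bigcup V(\bar\gamma(C))$; but also $v \in \bigcup V(C)$ by hypothesis, and the sets $\bigcup V(C')$ over distinct components $C'$ are disjoint (a vertex determines the component of any maximum clique containing it, since two maximum cliques sharing a vertex are adjacent in $X_\Q$ hence in the same component). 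Therefore $\bar\gamma(C) = C$, giving (b) and simultaneously showing $G_C$ is vertex-transitive. For (a), pick any vertex $u \in V(C_1)$ and $v \in V(C_2)$ and $\gamma$ with $\gamma(u) = v$; by the same disjointness argument $v \in \bigcup V(\bar\gamma(C_1)) \cap \bigcup V(C_2)$ forces $\bar\gamma(C_1) = C_2$, so $\gamma$ is an isomorphism $G_{C_1} \to G_{C_2}$, proving $G_{C_1} \cong G_{C_2}$.

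The main obstacle — and the only genuinely subtle point — is the disjointness of the sets $\bigcup V(C)$ across distinct components $C$, i.e. the fact that if a vertex $w$ lies in two maximum cliques $Q, Q'$ then $Q$ and $Q'$ lie in the same component of $X_\Q$. This is immediate from the definition ($Q \cap Q' \ni w$ so $Q \adj Q'$ in $X_\Q$ when $Q \ne Q'$), but it is what makes the whole argument work, so I would state it explicitly as a short preliminary remark before running the transitivity and isomorphism arguments above. Everything else is bookkeeping: checking that an automorphism of $G$ induces one of $X_\Q$, that it permutes components, and that its restriction to the union of a component's cliques is the desired graph isomorphism.
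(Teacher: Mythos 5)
Your proposal is correct, and it supplies exactly the argument the paper omits: the statement is given as an unproved Observation (prefaced only by ``it is not hard to see''), and the intended justification is the one you give — automorphisms of $G$ permute the maximum cliques, hence induce automorphisms of $X_\Q$, hence permute its components, and the disjointness of the sets $\bigcup V(C)$ over distinct components pins down which component a given automorphism sends $C$ to. You are also right to flag that pointwise transitivity of $G$ does not obviously give transitivity of $\operatorname{Aut}(G)$ on $\Q$ itself (so the paper's prose claim that $X_\Q$ is vertex-transitive is stronger than what the Observation asserts or what the later lemmas need), and that the stated conclusions about the $G_C$ follow without it.
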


A basic consequence of Observation \ref{transitiveClustering} is that if $G$ is
vertex-transitive and $G_C$ has a dominating vertex (or universal vertex), then
every vertex of $G_C$ is dominating; so $G_C$ is complete.  
Let $G$ be a vertex-transitive graph with $\omega > \frac23 (\Delta + 1)$.   
Suppose that $X_\Q$ has one or more edges.  By Kostochka's lemma, $\cap\Q_C$ is
nonempty, where $\Q_C$ is the set of maximum cliques in some component $G_C$.
%with an edge in $X_{\Q_C}$.
Choose a vertex $v\in\cap\Q_C$, and note that $v$ is adjacent to each vertex
in $G_C$.  Since $G$ is vertex-transitive, each vertex of $G_C$ is a
dominating vertex in $G_C$; so, in fact, $G_C$ is a clique, and $C$ is
edgeless.
Using Lemma \ref{TwoThirdsEqualityStructure}, we get a bit more.

\begin{lem}\label{TransitiveClusteringBigCliques}
Let $G$ be a connected vertex-transitive graph and let $\Q$ be the collection
of all maximum cliques in $G$.  If $\omega(G) \ge \frac23 \parens{\Delta(G) +
1}$, then either
\begin{itemize}
\item $X_\Q$ is edgeless; or
\item $X_\Q$ is a cycle and $G$ is the graph obtained from $X_\Q$ by blowing up each vertex to a $K_{\frac12 \omega(G)}$.
\end{itemize}
\end{lem}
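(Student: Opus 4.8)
The plan is to combine Observation~\ref{transitiveClustering} with Lemma~\ref{TwoThirdsEqualityStructure} applied to a single component, and then leverage vertex-transitivity twice: once to kill the ``$\cap\Q\ne\emptyset$'' branch and once to upgrade ``$\Delta(X_\Q)\le 2$'' to ``$X_\Q$ is a disjoint union of cycles,'' and finally to conclude $X_\Q$ is a single cycle when $G$ is connected. Fix a component $C$ of $X_\Q$ and write $\Q_C$ for its maximum cliques, $G_C \DefinedAs G[\bigcup V(C)]$; by Observation~\ref{transitiveClustering} each $G_C$ is vertex-transitive and all are isomorphic. Apply Lemma~\ref{TwoThirdsEqualityStructure} to $C$ (which is connected). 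In the first case $\cap \Q_C \ne\emptyset$: as already noted in the paragraph preceding the lemma, picking $v\in\cap\Q_C$ makes $v$ dominating in $G_C$, hence by transitivity $G_C$ is a clique and $C$ is a single vertex of $X_\Q$; doing this for every component shows $X_\Q$ is edgeless, the first conclusion. So assume from now that $X_\Q$ has an edge and hence every component falls in the second case of Lemma~\ref{TwoThirdsEqualityStructure}.

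Now I want to show every component $C$ is a cycle. We know $\Delta(X_\Q)\le 2$, so each component is a path or a cycle. Since $X_\Q$ is vertex-transitive (being the intersection graph of a transitive structure — this is the content of the remark opening the subsection, and can be stated cleanly: an automorphism of $G$ permutes the maximum cliques, giving an automorphism of $X_\Q$, and transitivity on vertices of $G$ yields transitivity on maximum cliques since every vertex lies in one and all maximum cliques meet some vertex-orbit uniformly), all components are isomorphic and $X_\Q$ is regular. A regular graph with $\Delta\le 2$ and at least one edge is $2$-regular, i.e.\ a disjoint union of cycles; in particular no component is a path (a path has endpoints of degree $1$). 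Hence each component $C$ of $X_\Q$ is a cycle. Since we are assuming $G$ is connected and the components $G_C$ are the ``blocks'' glued along shared vertices according to the cycle structure — more precisely $\bigcup_C V(G_C)=V(G)$ and distinct components $C_1,C_2$ give $G_{C_1},G_{C_2}$ that share no maximum clique, but they could in principle still share lower vertices; here I use that every vertex of $G$ lies in a maximum clique (transitivity, since $\omega$ is attained somewhere) so $V(G)=\bigcup_C \bigcup V(C)$, and connectivity of $G$ forces $X_\Q$ to be connected, hence a single cycle.

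Finally I must identify $G$ with the blow-up of this cycle $X_\Q = A_1 A_2 \cdots A_n A_1$ where each $A_i$ is a maximum clique. By the second bullet of Lemma~\ref{TwoThirdsEqualityStructure}, consecutive cliques satisfy $\card{A_i\cap A_{i+1}}=\frac12\omega$ and non-consecutive ones are disjoint; so writing $S_i \DefinedAs A_i\cap A_{i+1}$ we get $\card{S_i}=\frac12\omega$, the $S_i$ are pairwise disjoint, and $A_i = S_{i-1}\cup S_i$ is a partition of $A_i$ into two halves of size $\frac12\omega$. Thus $V(G)=\bigsqcup_i S_i$ with $\card{S_i}=\frac12\omega$, each $S_i\cup S_{i+1}$ is a clique, and there are no other maximum cliques; one checks there are no edges between $S_i$ and $S_j$ for $j\notin\{i-1,i+1\}$ (such an edge would extend to a clique larger than, or equal to $\omega$ but not among the $A_i$, contradicting that $\Q$ is \emph{all} maximum cliques, or contradicting the disjointness/structure — this is the step needing a short argument: an edge $xy$ with $x\in S_i$, $y\in S_j$ non-consecutive would lie in some maximum clique $Q$, which is some $A_k$, impossible since $A_k\subseteq S_{k-1}\cup S_k$ meets at most two consecutive parts). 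Therefore $G$ is exactly $X_\Q$ with each vertex blown up to $K_{\frac12\omega}$, which is the second conclusion.

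\medskip
\noindent\textbf{Main obstacle.} The delicate point is the last step: ruling out ``extra'' edges of $G$ between non-consecutive parts $S_i, S_j$. Lemma~\ref{TwoThirdsEqualityStructure} controls only how the \emph{maximum cliques} intersect, not all edges; one must argue that any edge of $G$ lies in some maximum clique (using $\omega\ge\frac23(\Delta+1) > \frac12\omega+1$, so $S_i\cup S_{i+1}$ is already a clique of size $\omega$ and a stray edge would create a clique meeting three parts or a new maximum clique outside the cycle, both contradictions). Getting this bookkeeping exactly right — and making sure the blow-up is an isomorphism, not just a spanning subgraph — is where the real care is needed; the rest is transitivity plus citing the structural lemmas.
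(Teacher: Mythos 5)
Your proof follows the same overall strategy as the paper (Kostochka for strict inequality, then Lemma~\ref{TwoThirdsEqualityStructure} plus transitivity at equality), but you've correctly identified the hard part and then not actually closed it: the paper settles everything in one stroke with a degree count that you never invoke. In the equality case, once a component $Z$ of $X_\Q$ is known to be a cycle $A_1\cdots A_n$, a vertex $v\in S_i=A_i\cap A_{i+1}$ is adjacent to all of $S_{i-1}\cup(S_i\setminus\{v\})\cup S_{i+1}$, which has $\tfrac12\omega+\tfrac12\omega-1+\tfrac12\omega=\tfrac32\omega-1=\Delta$ vertices. Since $\deg(v)\le\Delta$, this \emph{is} the entire neighborhood of $v$. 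That single observation simultaneously (i) shows $G_Z$ has no edges to the rest of $G$, hence $G=G_Z$ by connectedness and $X_\Q=X_Z$ is a single cycle, and (ii) rules out any stray edges between non-consecutive parts $S_i, S_j$, so $G$ is exactly the blow-up.

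By contrast, your proposal has three soft spots that this count would have repaired. First, ``connectivity of $G$ forces $X_\Q$ to be connected'' is a non-sequitur without it: distinct components of $X_\Q$ give vertex-disjoint $G_C$'s, but nothing you've said yet forbids edges of $G$ running between them. Second, your ``main obstacle'' fix --- ``any edge of $G$ lies in some maximum clique'' --- is not justified and is false in general, even for graphs with $\omega>\tfrac23(\Delta+1)$; you would need the degree count (or an equivalent) to prove it here, at which point you don't need the clique-membership detour at all. Third, your justification that $X_\Q$ is vertex-transitive is hand-waved (``all maximum cliques meet some vertex-orbit uniformly'' does not obviously give transitivity on $\Q$), and it is not clear this holds in general; the paper avoids the issue by only invoking Observation~\ref{transitiveClustering}, i.e.\ vertex-transitivity of $G_C$, from which one can argue that every vertex of $G_C$ lies in the same number of maximum cliques, so a path component (which would have an end-clique with a private vertex in only one maximum clique alongside shared vertices in two) is impossible. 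In short: right skeleton, but the missing ingredient is precisely the $\tfrac32\omega-1=\Delta$ degree computation, and without it the last third of the argument does not go through.
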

\begin{proof}
If $\omega(G) > \frac23 \parens{\Delta(G) + 1}$, then $X_\Q$ is edgeless as shown above.  Hence we may assume $\omega(G) = \frac23 \parens{\Delta(G) + 1}$.  Let $Z$ be a component of $X_\Q$ and put $\Z \DefinedAs V(Z)$.
By Lemma \ref{TwoThirdsEqualityStructure}, $\Delta(X_\Z) \le 2$ and if $B, C \in \Z$ are different neighbors of $A \in \Z$, then $B \cap C = \emptyset$ and $\card{A \cap B} = \card{A \cap C} = \frac12 \omega(G)$.  By Observation \ref{transitiveClustering}, $X_\Z$ must be a cycle.  But then every vertex in $G_Z$ has $\frac12 \omega(G) + \frac12 \omega(G) + \frac12 \omega(G) - 1 = \Delta(G)$ neighbors in $G_Z$ and thus $G = G_Z$.  Hence $X_\Q = X_\Z$ is a cycle and $G$ is the graph obtained from $X_\Q$ by blowing up each vertex to a $K_{\frac12 \omega(G)}$.
\end{proof}

\section{The fractional version}
The problem of determining chromatic number can be phrased as an integer
program: we aim to minimize the total number of colors used, subject to the
constraints that (i) each vertex gets colored and (ii) the vertices receiving
each color form an independent set.  To reach a linear program from this
integer program, we relax the constraint that each vertex is colored with a
single color, and instead allow a vertex to be colored with a combination of
colors, e.g., $1/2$ red, $1/3$ green, and $1/6$ blue. However, we still
require that the total weight of any color on any clique is at most 1.
The minimum value of this linear program is the fractional chromatic number,
denoted $\chi_f$ (see~\cite{ScheinermanUllmanFrac} for a formal definition and
many results on fractional coloring).

It is an easy exercise to show that every vertex-transitive graph $G$ satisfies
$\chi_f(G) = \frac{|G|}{\alpha(G)}$, where $|G|$ denotes $|V(G)|$ and
$\alpha(G)$ denotes the maximum size of an independent set. We also need
Haxell's condition \cite{haxell2001note} for the
existence of an independent transversal.

\begin{lem}[Haxell \cite{haxell2001note}]
\label{HaxellTransversal}
Let $H$ be a graph and $V_1 \cup \cdots \cup V_r$ a partition of $V(H)$. 
Suppose that $\card{V_i} \geq 2\Delta(H)$ for each $i \in \irange{r}$. Then $H$ has an independent set $\set{v_1, \ldots, v_r}$
where $v_i \in V_i$ for each $i \in \irange{r}$.
\end{lem}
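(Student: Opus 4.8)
The plan is to prove Lemma~\ref{HaxellTransversal} by an augmenting/exchange argument on partial independent transversals, following the strategy of Haxell's original proof. Call a set $I \subseteq V(H)$ a \emph{partial independent transversal} if $I$ is independent in $H$ and $\card{I \cap V_i} \le 1$ for every $i \in \irange{r}$, and say its \emph{size} is the number of parts it meets. Choose a partial independent transversal $I$ of maximum size and suppose, for contradiction, that $I$ misses some part; relabel so that $I$ meets exactly $V_1, \dots, V_k$ with $k < r$, let $V_0$ be a part that $I$ misses, and write $u_i$ for the unique vertex of $I$ in $V_i$ for $1 \le i \le k$. Maximality immediately forces one structural fact: every vertex of $V_0$ has a neighbor in $I$, since any vertex of $V_0$ with no neighbor in $I$ could be added to $I$ to increase its size.

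From this I would build, layer by layer, a rerouting of $I$ that covers one more part than $I$ does, contradicting maximality. Starting from any $w_0 \in V_0$, its neighbors in $I$ lie in at most $\Delta(H)$ of the parts $V_1, \dots, V_k$; to keep $w_0$ while still covering those parts one must replace the corresponding $u_i$ by alternative vertices of $V_i$ that avoid everything already committed, which may in turn displace further vertices of $I$, and so on. The quantitative lever that makes this terminate in our favor is the hypothesis $\card{V_i} \ge 2\Delta(H)$: each committed vertex has at most $\Delta(H)$ neighbors in total, so if $F$ denotes the set of parts affected so far and each affected part contributes (essentially) one committed vertex, then the number of vertices blocked inside $\bigcup_{i \in F} V_i$ is at most $\Delta(H)\card{F}$, whereas $\sum_{i \in F} \card{V_i} \ge 2\Delta(H)\card{F}$. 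So at each stage either some affected part still has an available, unblocked vertex with which the pending augmentation can be completed, giving the desired contradiction, or the process strictly enlarges $F$; since $F$ cannot grow past $r$, the former must eventually occur.

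The main obstacle is exactly the bookkeeping needed to make the previous paragraph honest: one has to organize the layered exchange so that it is monotone and terminating, and so that when it halts it genuinely produces a \emph{larger} partial independent transversal rather than a different one of the same size or an infinite regress of swaps. This is the delicate core of Haxell's argument; the trick is to be ``lazy'', displacing a vertex of $I$ only when a conflict actually forces it, and to carry the invariant that the number of vertices ever committed never exceeds (about) $\card{F}$, which is precisely what the factor $2$ in $2\Delta(H)$ is there to buy. A more black-box alternative goes through topology: combine a topological Hall theorem, which guarantees an independent transversal provided that for every $J \subseteq \irange{r}$ the independence complex of $H\brackets{\bigcup_{j \in J} V_j}$ is $(\card{J}-2)$-connected, with the Meshulam-type bound that the independence complex of any graph on $n$ vertices with maximum degree $d$ is $\parens{\frac{n}{2d} - 2}$-connected; applied to the subgraph induced by any $j$ of the parts, which has at least $2\Delta(H)\,j$ vertices and maximum degree at most $\Delta(H)$, this yields $(j-2)$-connectivity and hence the transversal. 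I would present the combinatorial argument, since it is self-contained and matches the elementary style of the rest of the note.
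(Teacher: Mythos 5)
The paper cites this lemma to Haxell without proof, so there is no in-paper argument to compare against; the only question is whether your sketch constitutes a proof. It does not, and you essentially concede this yourself. Your first two paragraphs correctly set up the framework of Haxell's combinatorial argument: take a maximum partial independent transversal $I$, note that a missed part $V_0$ must be dominated by $I$, and try to augment via cascading exchanges controlled by the hypothesis $|V_i| \geq 2\Delta(H)$. But your third paragraph is an explicit admission that the entire technical content, namely organizing the layered swaps so that they are monotone, terminate, and genuinely enlarge $I$ rather than cycle, has not been carried out. That bookkeeping is not a side issue; it is the theorem. Moreover the informal invariant you float, that the number of committed vertices stays around $|F|$, is not automatic: cascading displacements can revisit a part and commit several vertices there, and the counting one actually needs compares the number of vertices of $\bigcup_{i\in F}V_i$ dominated by the current configuration against $\sum_{i\in F}|V_i| \geq 2\Delta(H)\,|F|$, together with a careful ``lazy'' discipline ensuring roughly one dominator per touched part and a termination argument. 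None of that is supplied.

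The topological route you mention at the end (the topological Hall theorem of Aharoni--Berger--Ziv/Meshulam plus the connectivity bound for the independence complex of a bounded-degree graph) is a legitimate complete alternative, and the arithmetic you give, $n \ge 2\Delta(H)\,j$ and $d \le \Delta(H)$ yielding $\frac{n}{2d}-2 \ge j-2$, is correct. But you explicitly decline to present it. As submitted, this is a well-informed plan of attack and an accurate description of where the difficulty lies, not a proof.
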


\begin{lem}\label{TransitiveFractionalColoringWithBigCliques}
If $G$ is a vertex-transitive graph with $\omega(G) \ge \frac23 \parens{\Delta(G) + 1}$, then $\alpha(G) = \floor{\frac{|G|}{\omega(G)}}$.  Moreover, if $\omega(G) > \frac23 \parens{\Delta(G) + 1}$, then $\omega(G)$ divides $|G|$.
\end{lem}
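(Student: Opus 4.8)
The plan is to prove the inequalities $\alpha(G)\le\floor{|G|/\omega(G)}$ and $\alpha(G)\ge\floor{|G|/\omega(G)}$ separately, reading the divisibility claim off the structure used for the second. Throughout write $\omega=\omega(G)$ and $\Delta=\Delta(G)$; since Lemma~\ref{TransitiveClusteringBigCliques} is stated for connected graphs, I would assume $G$ connected (this is exactly where it is needed). For the upper bound, combine the identity $\chi_f(G)=|G|/\alpha(G)$ for vertex-transitive graphs, recorded at the start of this section, with the trivial bound $\chi_f(G)\ge\omega$: together these give $\alpha(G)\le|G|/\omega$, and since $\alpha(G)$ is an integer, $\alpha(G)\le\floor{|G|/\omega}$. (This half uses neither connectivity nor the density hypothesis.)

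For the lower bound I would invoke Lemma~\ref{TransitiveClusteringBigCliques} and split on its two cases, where $\Q$ denotes the set of all maximum cliques. Suppose first that $X_\Q$ is edgeless. Then distinct maximum cliques are disjoint, and because $G$ is vertex-transitive every vertex lies in some maximum clique; hence $\Q$ partitions $V(G)$ into $r\DefinedAs|G|/\omega$ cliques of size $\omega$. In particular $\omega\mid|G|$, which also settles the ``moreover'' clause, since $\omega>\frac23(\Delta+1)$ forces this case. To find an independent set meeting each part of this partition, let $H$ be obtained from $G$ by deleting every edge lying inside a part. A vertex $v$ in a part $Q$ has $\omega-1$ neighbors inside $Q$, hence at most $\Delta-(\omega-1)$ neighbors in $H$; rewriting the hypothesis as $\Delta\le\frac32\omega-1$ gives $\Delta(H)\le\frac12\omega$, so $|Q|=\omega\ge2\Delta(H)$. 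Haxell's Lemma~\ref{HaxellTransversal}, applied to $H$ with this partition, yields a transversal independent in $H$; meeting each clique exactly once, it is independent in $G$ too, so $\alpha(G)\ge r$, and with the upper bound $\alpha(G)=r=\floor{|G|/\omega}$.

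In the remaining case, $G$ is obtained from a cycle $X_\Q=C_n$ by blowing up each vertex to a $K_{\omega/2}$, so $|G|=n\omega/2$. Any independent set of $G$ picks at most one vertex from each blown-up clique, and the cliques it meets are pairwise nonadjacent on $C_n$; thus $\alpha(G)=\alpha(C_n)=\floor{n/2}=\floor{|G|/\omega}$, again matching the upper bound.

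The main obstacle is the edgeless case: one needs an independent set hitting every maximum clique, but the only degree bound available for $G$ itself is $\omega<2\Delta$, far too weak for Haxell's Lemma. The fix is to run Haxell not on $G$ but on $H$, where the ``cheap'' intra-clique edges have been deleted and the maximum degree falls to at most $\omega/2$; the hypothesis $\omega\ge\frac23(\Delta+1)$ is precisely what forces $\omega\ge2\Delta(H)$ (with equality in the tight instances), so Lemma~\ref{HaxellTransversal} just barely applies. Everything else is routine arithmetic with floors.
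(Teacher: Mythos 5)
Your proof is correct and follows essentially the same route as the paper's: Lemma~\ref{TransitiveClusteringBigCliques} to obtain either a clique partition or a blown-up cycle, Haxell's Lemma~\ref{HaxellTransversal} applied to the graph with intra-clique edges deleted in the first case, and a direct computation in the second. The only difference is organizational: you split on the lemma's two structural outcomes rather than on whether $\omega(G) > \frac23(\Delta(G)+1)$ or equality holds, which in fact cleanly covers the subcase where $\omega(G) = \frac23(\Delta(G)+1)$ but $X_\Q$ is still edgeless, a subcase the paper's proof passes over in silence.
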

\begin{proof}
We may assume that $G$ is connected. Since $G$ is vertex-transitive, every
vertex of $G$ is in an $\omega(G)$-clique. First, suppose $\omega(G) > \frac23
\parens{\Delta(G) + 1}$.  Then Lemma \ref{TransitiveClusteringBigCliques} shows
that the vertex set of $G$ can be partitioned into cliques $V_1, \ldots, V_r$
with $|V_i| \ge \ceil{\frac23 \parens{\Delta(G) + 1}}$ for each $i \in
\irange{r}$.  Let $H$ be the graph formed from $G$ by making each $V_i$
independent.  Then $\Delta(H) \le \Delta(G) + 1 - \ceil{\frac23
\parens{\Delta(G) + 1}}$; now by Lemma \ref{HaxellTransversal}, $G$ has
an independent set with a vertex in each $V_i$.  Since $G$ is vertex-transitive,
all $V_i$ have the same size; so, in fact, $\card{V_i}=\omega(G)$ for all $i$.
But now $\card{G} = \alpha(G)\card{V_i}=\alpha(G)\omega(G)$, so we're done.

So instead suppose $\omega(G) = \frac23 \parens{\Delta(G) + 1}$.  Now Lemma
\ref{TransitiveClusteringBigCliques} shows that $G$ is obtained from a cycle
$C$ by blowing up each vertex of $C$ to a copy of $K_{\frac12 \omega(G)}$. 
Hence $\alpha(G) = \floor{\frac{|C|}{2}} = \floor{\frac{|G|}{\omega(G)}}$ as
desired.
\end{proof}

Reed's $\omega$, $\Delta$, and $\chi$ conjecture states that every graph
satisfies 
\[\chi \leq\ceil{\frac{\omega + \Delta + 1}{2}}.\]

In \cite{molloy2002graph}, Molloy and Reed proved this upper bound without the
round-up for the fractional chromatic number $\chi_f$.  
Since $\chi_f(G) = \frac{|G|}{\alpha(G)}$ for vertex-transitive graphs, an
earlier result of Fajtlowicz \cite{fajtlowicz1984independence} suffices for our
purposes.

\begin{lem}[Fajtlowicz \cite{fajtlowicz1984independence}]\label{fajtlowicz}
For every graph $G$, we have $\alpha(G) \ge \frac{2|G|}{\omega(G) + \Delta(G) + 1}$.
\end{lem}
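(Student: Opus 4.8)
The plan is to prove the inequality directly, by studying the edges between a fixed maximum independent set and its complement; no induction or detour through the fractional chromatic number is needed.

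First I would fix a maximum independent set $A$, write $\alpha \DefinedAs \alpha(G)$ and $B \DefinedAs V(G) \setminus A$, so that $|B| = |G| - \alpha$ and, by maximality of $A$, every vertex of $B$ has a neighbor in $A$. For $a \in A$ let $P(a) \DefinedAs \set{b \in B : N(b) \cap A = \set{a}}$ be its set of private neighbors; the sets $P(a)$ are pairwise disjoint, their union $B_1$ consists of exactly those $b \in B$ having a unique neighbor in $A$, and I set $B_{\geq 2} \DefinedAs B \setminus B_1$.

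The one step that takes a little thought is the claim that $\set{a} \cup P(a)$ is a clique for each $a \in A$: the vertex $a$ is adjacent to all of $P(a)$ by definition, and if $b_1, b_2 \in P(a)$ were distinct and non-adjacent, then $(A \setminus \set{a}) \cup \set{b_1, b_2}$ would be an independent set of size $\alpha + 1$ — here one uses exactly that $b_1$ and $b_2$ have no neighbor in $A$ besides $a$, so the only possible new edge among these vertices would be $b_1 b_2$ itself. Hence $|P(a)| \leq \omega(G) - 1$, and therefore $|B_1| = \sum_{a \in A} |P(a)| \leq \alpha(\omega(G) - 1)$.

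Finally I would double-count the edges between $A$ and $B$. Since $A$ is independent, this number equals $\sum_{a \in A} d(a) \leq \alpha \Delta(G)$; counting from the $B$-side it equals $\sum_{b \in B} |N(b) \cap A| \geq |B_1| + 2|B_{\geq 2}| = |B| + |B_{\geq 2}|$, so $|B_{\geq 2}| \leq \alpha\Delta(G) - |B|$. Combining this with $|B| \leq |B_1| + |B_{\geq 2}|$ gives $2|B| \leq \alpha(\omega(G) + \Delta(G) - 1)$, i.e.\ $2(|G| - \alpha) \leq \alpha(\omega(G) + \Delta(G) - 1)$, which rearranges to the claimed bound. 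The only real obstacle is the clique claim above; everything after it is bookkeeping, and the bound is tight (e.g.\ for $C_5$ and its clique blow-ups), which is a good sanity check on the constants.
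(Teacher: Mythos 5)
The paper states this lemma with only a citation to Fajtlowicz and gives no proof of its own, so there is nothing to compare against line by line; I can only assess your argument on its merits, and it checks out. The decomposition of $B = V(G)\setminus A$ into $B_1$ (vertices with a unique neighbor in $A$) and $B_{\geq 2}$ is clean, the clique claim for $\set{a}\cup P(a)$ is correct and is indeed the one nontrivial step: if $b_1,b_2\in P(a)$ were nonadjacent, then since $a$ is their only neighbor in $A$, the set $(A\setminus\set{a})\cup\set{b_1,b_2}$ would be independent of size $\alpha+1$, a contradiction. From there the bookkeeping is right: $|B_1|\leq\alpha(\omega-1)$ from the clique bound, $|B|+|B_{\geq 2}|\leq e(A,B)\leq\alpha\Delta$ from the edge count (using that $A$ is independent so all of $d(a)$ is spent on $B$), and adding $|B|=|B_1|+|B_{\geq 2}|$ to the second bound gives $2|B|\leq\alpha(\omega+\Delta-1)$, which rearranges to $\alpha\geq 2|G|/(\omega+\Delta+1)$. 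The $C_5$ sanity check is a good instinct and confirms the constant. This is essentially the standard private-neighbor argument for Fajtlowicz's bound; the only cosmetic point is that you write ``$|B|\leq|B_1|+|B_{\geq 2}|$'' where equality holds (they partition $B$), but the inequality is the direction you need, so this does not affect correctness.
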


\begin{thm}
If $G$ is vertex-transitive, then $\alpha(G) \ge \frac{|G|}{\max\set{\omega(G), \frac56\parens{\Delta(G) + 1}}}$.
\label{fracversion}
\end{thm}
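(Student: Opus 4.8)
The plan is to pass to the fractional chromatic number. Since $G$ is vertex-transitive, $\chi_f(G) = |G|/\alpha(G)$, so the claimed inequality $\alpha(G) \ge |G|/\max\set{\omega(G), \frac56\parens{\Delta(G)+1}}$ is equivalent to $\chi_f(G) \le \max\set{\omega(G), \frac56\parens{\Delta(G)+1}}$, and I will establish it by bounding $\alpha(G)$ from below. As usual we may assume $G$ is connected: a disconnected vertex-transitive graph is a disjoint union of isomorphic connected vertex-transitive graphs, so $|G|$ and $\alpha(G)$ scale by the number of components while $\omega(G)$ and $\Delta(G)$ are unchanged, and the bound follows componentwise. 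I then split into two regimes according to the size of $\omega(G)$ relative to $\frac23\parens{\Delta(G)+1}$; this is precisely the threshold at which the clustering results of Section~2 take effect, and also the threshold at which Fajtlowicz's bound becomes strong enough.

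First I would handle the case $\omega(G) > \frac23\parens{\Delta(G)+1}$. Here Lemma~\ref{TransitiveFractionalColoringWithBigCliques} applies and gives both that $\omega(G)$ divides $|G|$ and that $\alpha(G) = \floor{|G|/\omega(G)}$, hence in fact $\alpha(G) = |G|/\omega(G)$ exactly. Since trivially $\omega(G) \le \max\set{\omega(G), \frac56\parens{\Delta(G)+1}}$, we conclude $\alpha(G) = |G|/\omega(G) \ge |G|/\max\set{\omega(G), \frac56\parens{\Delta(G)+1}}$, which is the desired inequality (indeed an equality whenever $\omega$ attains the maximum).

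Next, the case $\omega(G) \le \frac23\parens{\Delta(G)+1}$. Now the right-hand maximum equals $\frac56\parens{\Delta(G)+1}$, so I must show $\alpha(G) \ge \frac{6|G|}{5(\Delta(G)+1)}$. This follows from Fajtlowicz's bound (Lemma~\ref{fajtlowicz}), $\alpha(G) \ge \frac{2|G|}{\omega(G)+\Delta(G)+1}$, combined with the observation that $\omega(G)+\Delta(G)+1 \le \frac23\parens{\Delta(G)+1} + \parens{\Delta(G)+1} = \frac53\parens{\Delta(G)+1}$, so $\frac{2|G|}{\omega(G)+\Delta(G)+1} \ge \frac{2|G|}{\frac53\parens{\Delta(G)+1}} = \frac{6|G|}{5(\Delta(G)+1)}$.

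There is no serious obstacle; the only delicate point is the boundary value $\omega(G) = \frac23\parens{\Delta(G)+1}$. There Lemma~\ref{TransitiveFractionalColoringWithBigCliques} yields only $\alpha(G) = \floor{|G|/\omega(G)}$, which by itself need not give $\alpha(G) \ge |G|/\omega(G)$; one could instead invoke the blow-up-of-a-cycle structure from Lemma~\ref{TransitiveClusteringBigCliques} (noting the short cycle $C_3$ does not occur, since blowing it up gives a complete graph and would change $\omega$). It is cleaner, however, to simply let this value fall into the second regime, where $\frac56\parens{\Delta(G)+1} = \frac54\omega(G) \ge \frac12\parens{\omega(G)+\Delta(G)+1}$, so Fajtlowicz's bound already does the job. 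Arranging the case split so that Lemma~\ref{TransitiveFractionalColoringWithBigCliques} is used only under the strict inequality $\omega(G) > \frac23\parens{\Delta(G)+1}$ is the one bookkeeping subtlety to watch.
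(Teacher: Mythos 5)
Your proof is correct and follows essentially the same route as the paper: the identical case split at $\omega(G) = \frac23(\Delta(G)+1)$, invoking Lemma~\ref{TransitiveFractionalColoringWithBigCliques} for the strict-inequality regime and Lemma~\ref{fajtlowicz} for the remaining one. Your remark about pushing the boundary case into the Fajtlowicz regime matches the paper's arrangement (strict inequality in the first case, non-strict in the second).
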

\begin{proof}
Suppose $\omega(G) > \frac23 \parens{\Delta(G) + 1}$.  Then Lemma \ref{TransitiveFractionalColoringWithBigCliques} shows $\alpha(G) = \frac{|G|}{\omega(G)}$ and we're done.  Otherwise, $\omega(G) \le \frac23 \parens{\Delta(G) + 1}$ and Lemma \ref{fajtlowicz} gives $\alpha(G) \ge \frac{2|G|}{\frac23 (\Delta(G) + 1) + \Delta(G) + 1} = \frac{|G|}{\frac56 (\Delta(G) + 1)}$ as desired.
\end{proof}

Restating Theorem~\ref{fracversion} in terms of fractional coloring, we have
the following.

\begin{cor}
If $G$ is vertex-transitive, then $\chi_f(G) \le \max\set{\omega(G), \frac56\parens{\Delta(G) + 1}}$.
\end{cor}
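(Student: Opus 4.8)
The plan is to deduce this immediately from Theorem~\ref{fracversion} together with the standard identity $\chi_f(G) = \frac{|G|}{\alpha(G)}$ for vertex-transitive graphs (one may also just cite~\cite{ScheinermanUllmanFrac}). Indeed, Theorem~\ref{fracversion} gives $\alpha(G) \ge \frac{|G|}{\max\set{\omega(G), \frac56\parens{\Delta(G) + 1}}}$, and dividing $|G|$ by both sides yields
\[
\chi_f(G) \;=\; \frac{|G|}{\alpha(G)} \;\le\; \max\set{\omega(G), \tfrac56\parens{\Delta(G) + 1}},
\]
which is exactly the claimed bound. So the only thing that needs any argument is the identity itself.

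Here is how I would justify it. Write $\Gamma$ for the automorphism group of $G$ and fix a maximum independent set $I$. The images $\set{\gamma(I) : \gamma \in \Gamma}$ form a multiset of $|\Gamma|$ independent sets, and by vertex-transitivity (count the pairs $(\gamma, v)$ with $v \in \gamma(I)$ and use that the count is the same for every $v$) each vertex of $G$ lies in exactly $\frac{|\Gamma|\,\alpha(G)}{|G|}$ of them. Assigning weight $\frac{|G|}{|\Gamma|\,\alpha(G)}$ to each $\gamma(I)$ therefore produces a fractional coloring in which every vertex receives total weight $1$ while the total weight of all the sets is $\frac{|G|}{\alpha(G)}$; hence $\chi_f(G) \le \frac{|G|}{\alpha(G)}$. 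For the reverse inequality, given any fractional coloring $(x_S)$ — nonnegative weights on independent sets with $\sum_{S \ni v} x_S \ge 1$ for every vertex $v$ — summing over all vertices gives $\sum_S x_S \card{S} \ge |G|$, and since $\card{S} \le \alpha(G)$ for each $S$ we get $\sum_S x_S \ge \frac{|G|}{\alpha(G)}$; thus $\chi_f(G) \ge \frac{|G|}{\alpha(G)}$. (A disconnected vertex-transitive graph is a disjoint union of isomorphic connected ones, and both sides of the corollary are unaffected by restricting to a single component, so one may freely assume $G$ is connected throughout, consistent with the earlier proofs in this section.)

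There is no genuine obstacle in this corollary: all of the difficulty lives upstream in Theorem~\ref{fracversion}, whose proof splits at the threshold $\omega(G) = \frac23\parens{\Delta(G)+1}$ and draws on the clustering results of Section~2 (Lemmas~\ref{KostochkaCliqueGraph}, \ref{TwoThirdsEqualityStructure}, and \ref{TransitiveClusteringBigCliques}), Haxell's transversal theorem (Lemma~\ref{HaxellTransversal}), and Fajtlowicz's bound (Lemma~\ref{fajtlowicz}). If anything is worth a sanity check, it is only that the $\frac56\parens{\Delta+1}$ term — rather than $\omega$ — is the binding constraint precisely in the range $\omega \le \frac23\parens{\Delta+1}$, which is exactly the case handled by Lemma~\ref{fajtlowicz} in the proof of Theorem~\ref{fracversion}.
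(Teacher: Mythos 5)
Your proposal is correct and matches the paper's approach exactly: the paper presents this corollary as an immediate restatement of Theorem~\ref{fracversion} via the identity $\chi_f(G) = |G|/\alpha(G)$ for vertex-transitive graphs, which is precisely what you do. Your included justification of that identity (orbit-averaging of a maximum independent set for the upper bound, double-counting for the lower bound) is a correct rendering of the standard argument that the paper simply cites to~\cite{ScheinermanUllmanFrac}.
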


\section{Reed's conjecture plus strong coloring}
For a positive integer $r$, a graph $G$ with $\card{G} = rk$ is called
\emph{strongly $r$-colorable} if for every partition of $V(G)$ into parts of
size $r$ there is a proper coloring of $G$ that uses all $r$ colors on each
part.  If $\card{G}$ is not a multiple of $r$, then $G$ is strongly
$r$-colorable iff the graph formed by adding $r\ceil{\frac{|G|}{r}} - |G|$
isolated vertices to $G$ is strongly $r$-colorable.  The \emph{strong chromatic
number} $s\chi(G)$ is the smallest $r$ for which $G$ is strongly $r$-colorable.
Not surprisingly, if $G$ is strongly $r$-colorable, then $G$ is also strongly
$(r+1)$-colorable, although the proof of this fact is
non-trivial~\cite{Fellows1990}.

In \cite{haxell2004strong}, Haxell proved that the strong chromatic number of
any graph is at most $3\Delta - 1$.  
In \cite{haxell2008strong}, she proved further that for every $c>11/4$ there
exists $\Delta_c$ such that if $G$ has maximum degree $\Delta$ at least
$\Delta_c$, then $G$ has strong chromatic number at most $c\Delta$.
The strong $2\Delta$-colorability conjecture \cite{aharoni2007independent} says
that the strong chromatic number of any graph is at most $2\Delta$.  If true,
this conjecture would be sharp. We need the following intermediate
conjecture.

\begin{conjecture}\label{StrongTransitive}
The strong chromatic number of any vertex-transitive graph is at most $\frac52 \Delta$.
\end{conjecture}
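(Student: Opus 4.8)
The plan is to reduce Conjecture~\ref{StrongTransitive} to the theory of independent transversals and then to exploit the two features special to a vertex-transitive graph $G$ — it is $\Delta$-regular, and, when $\omega(G)$ is large, it is structurally rigid by Section~2 — in order to improve Haxell's general bound $s\chi\le 3\Delta-1$ of~\cite{haxell2004strong} down to $\frac52\Delta$. Note that $\frac52\Delta>2\Delta$, so this conjecture is weaker than the strong $2\Delta$-colourability conjecture; the goal is to beat $3\Delta-1$ using transitivity, not to reach the conjectured optimum, and this is also why the asymptotic bound $c\Delta$ for $c>\frac{11}{4}$ of~\cite{haxell2008strong} does not by itself suffice. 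First I would reduce to $G$ connected (a standard step, using that strong $r$-colourability is unaffected by adding isolated vertices and the monotonicity result of~\cite{Fellows1990}). If $\omega(G)\ge\frac23(\Delta(G)+1)$ then Lemma~\ref{TransitiveClusteringBigCliques} pins $G$ down: it is either a blow-up of a cycle by copies of $K_{\frac12\omega(G)}$, or a graph whose maximum cliques are pairwise disjoint and hence partition $V(G)$ into cliques of size $\omega(G)>\frac23(\Delta(G)+1)$, so that $G$ is a disjoint union of copies of $K_{\omega(G)}$ together with a cross graph of maximum degree $<\frac13\Delta(G)$. In both cases $G$ is structured and sparse enough to be handled directly (in the cycle-blow-up case $s\chi(G)$ should be roughly $2\omega(G)$, well below $\frac52\Delta(G)$), so I expect this case to be routine, if tedious, and from now on I assume $\omega(G)<\frac23(\Delta(G)+1)$.

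In this generic case, set $r:=\floor{\frac52\Delta(G)}$ and fix a partition $\mathcal V=\set{V_1,\dots,V_k}$ of $V(G)$ into parts of size $r$; a strong $r$-colouring with respect to $\mathcal V$ is exactly a partition of $V(G)$ into $r$ independent transversals of $\mathcal V$. I would peel these off one at a time using Haxell's Lemma~\ref{HaxellTransversal}: as long as the parts have common size at least $2\Delta(G)$ we may remove another independent transversal, which disposes of the first $r-2\Delta(G)\approx\frac12\Delta(G)$ colour classes, and the difficulty is the final $2\Delta(G)$ transversals, where naive peeling stalls. The route I would pursue is to revisit Haxell's proof of the $3\Delta-1$ bound and its refinement in~\cite{haxell2008strong}, in which, heuristically, the ``3'' is ``2'' (from Haxell's transversal theorem) plus a safety margin of ``1'' needed to keep an augmenting/reservoir argument alive through the peeling; the aim is to show that for $\Delta$-regular graphs this margin can be reduced to $\frac12\Delta(G)$, since the perturbation of local neighbourhood counts caused by deleting a single independent transversal is more uniformly controlled when all degrees are equal. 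Vertex-transitivity, beyond mere regularity, might additionally be used to seed the peeling in a balanced way.

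The hard part is exactly this iteration. After removing even one transversal $G$ is no longer vertex-transitive, so the symmetry that organises the setup is lost, and one runs into the same phenomenon that keeps the strong $2\Delta$-colourability conjecture open: regularity does not obviously bound how unevenly the deletion of a transversal can eat into the remaining structure. A possible escape is to pass to a fractional relaxation of strong colouring, prove the bound there by LP duality (where $2\Delta$, or less, should be attainable), and then use $\chi_f(G)=\frac{|G|}{\alpha(G)}$ together with an averaging argument over the automorphism group of $G$ to round back to an integral strong colouring at a bounded multiplicative cost; but it is unclear whether vertex-transitivity controls the integrality gap for strong colouring (for ordinary colouring it does not), so I expect this rounding step to be the genuine crux, and it seems quite possible that proving Conjecture~\ref{StrongTransitive} requires a new idea about independent transversals in regular graphs rather than merely a sharpening of the bookkeeping in the known arguments.
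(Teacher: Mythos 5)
The statement you are addressing is stated in the paper as a \emph{conjecture}, not a theorem: the paper offers no proof of it and uses it only as a hypothesis (together with Conjecture~\ref{ReedTransitive}) in Theorem~\ref{ConjectureaImplyConjecture}. So there is no argument in the paper to compare yours against, and your write-up is, by its own admission, a research plan rather than a proof. It has genuine gaps at exactly the points you flag. The central one is the peeling step: Haxell's Lemma~\ref{HaxellTransversal} lets you extract independent transversals only while every part still has at least $2\Delta(G)$ vertices, so after roughly $\frac12\Delta(G)$ rounds the argument stalls with $2\Delta(G)$ colour classes still to produce --- and this residual problem is precisely where all the work in the $3\Delta-1$ bound of~\cite{haxell2004strong} and the $(11/4+\epsilon)\Delta$ bound of~\cite{haxell2008strong} is done. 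You supply no mechanism by which regularity or transitivity pushes the constant below $11/4$, and, as you yourself note, the symmetry is destroyed as soon as the first transversal is removed. The fallback via a fractional relaxation founders on the same admission: nothing is known that bounds the integrality gap for strong colouring of vertex-transitive graphs, and the identity $\chi_f(G)=|G|/\alpha(G)$ concerns ordinary fractional colouring, not any fractional analogue of strong colouring.

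A second, smaller gap: in the case $\omega(G)\ge\frac23(\Delta(G)+1)$ you treat the structural description of $G$ from Lemma~\ref{TransitiveClusteringBigCliques} (disjoint maximum cliques, or a cycle blow-up) as if it settled strong colourability, but strong $r$-colourability quantifies over \emph{every} partition of $V(G)$ into parts of size $r$, not just the natural partition into maximum cliques. An adversarial partition can cut across the cliques arbitrarily, and the maximum degree of $G$ is still $\Delta(G)$, so even this ``routine'' case needs an actual argument; the assertion that the cycle blow-up has strong chromatic number roughly $2\omega(G)$ is unproved. None of this is a criticism of the plan as a plan --- the problem is open --- but as a proof it does not close.
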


We also need Reed's conjecture \cite{reed1998omega} restricted to vertex-transitive graphs.

\begin{conjecture}\label{ReedTransitive}
Every vertex-transitive graph satisfies $\chi \leq\ceil{\frac{\omega + \Delta + 1}{2}}$.
\end{conjecture}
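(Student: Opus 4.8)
The plan is to prove $\chi(G)\le k$, where $k\DefinedAs\ceil{\frac{\omega(G)+\Delta(G)+1}{2}}$, by splitting on the size of $\omega(G)$ relative to $\Delta(G)$ into three regimes of increasing difficulty, the sparse regime $\omega(G)<\frac23\parens{\Delta(G)+1}$ --- which is precisely Reed's conjecture --- being the main obstacle. We may assume $G$ is connected and not complete. \emph{Regime~1: $\omega(G)\ge\Delta(G)-3$.} Here $k$ is within $1$ of $\Delta(G)$ and the bound follows from standard coloring theorems: if $\omega(G)\ge\Delta(G)-2$ then $k\ge\Delta(G)$, so Brooks' theorem gives $\chi(G)\le\Delta(G)\le k$ (recall $K_{\Delta(G)+1}\not\subseteq G$); and if $\omega(G)=\Delta(G)-3$ then $k=\Delta(G)-1$ and, since $\omega(G)<\Delta(G)$ forces $K_{\Delta(G)}\not\subseteq G$, the Main Theorem gives $\chi(G)\le\Delta(G)-1=k$ whenever $\Delta(G)\ge13$, the finitely many pairs with $\Delta(G)\le12$ being a base case.

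\emph{Regime~2: $\frac23\parens{\Delta(G)+1}\le\omega(G)\le\Delta(G)-4$.} Here I would use the clustering structure of Section~2. By Lemma~\ref{TransitiveClusteringBigCliques}, either $G$ is obtained from a cycle $C$ by blowing up each vertex to a $K_{\omega(G)/2}$, or the clique graph $X_\Q$ is edgeless, i.e.\ the maximum cliques of $G$ are pairwise disjoint. In the blow-up case $\chi(G)$ is computed directly --- $\alpha(G)=\floor{\card{C}/2}$, so $\chi_f(G)=\card{G}/\alpha(G)$, and Catlin's computation for such blow-ups gives the value of $\chi(G)$ itself --- and one checks $\chi(G)\le k$, with equality exactly when $\card{C}=5$ (and $\chi(G)=\omega(G)$ when $\card{C}$ is even). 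In the edgeless case, vertex-transitivity places every vertex in a unique maximum clique, so $V(G)$ partitions into cliques $V_1,\dots,V_m$, each of size $\omega(G)$, with the graph of edges between distinct parts having maximum degree $d\le\Delta(G)+1-\omega(G)$. Since $k-\omega(G)=\ceil{d/2}$, it suffices to $k$-color $G$ by giving each $V_i$ a rainbow $\omega(G)$-subset of the palette and choosing these subsets to respect the between-edges; this is the setting of a lopsided independent-transversal argument in the spirit of Haxell's Lemma~\ref{HaxellTransversal} and the proof of Lemma~\ref{TransitiveFractionalColoringWithBigCliques}, and the slack $\ceil{d/2}$ is enough to carry it out.

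\emph{Regime~3: $\omega(G)<\frac23\parens{\Delta(G)+1}$.} This is the heart of the matter and where I expect the real difficulty: no structural reduction is available and the claim is genuinely Reed's conjecture, now in a range where $k$ can be as small as about $\frac12\Delta(G)$. (When $\Delta(G)$ is small this is a finite check, and when $\omega(G)$ is small relative to $\Delta(G)$ the bound is beaten outright by probabilistic colorings of $K_{\omega(G)+1}$-free graphs, so the truly open sub-range is $\omega(G)=\Theta(\Delta(G))$ below the $\frac23$-threshold.) The natural attack is to round the fractional bound: by Theorem~\ref{fracversion} --- which for transitive graphs rests on $\chi_f(G)=\card{G}/\alpha(G)$ and Fajtlowicz's Lemma~\ref{fajtlowicz} --- we already have $\chi_f(G)\le\frac{\omega(G)+\Delta(G)+1}{2}\le k$, so one ``only'' needs to round a fractional $k$-coloring to an integral one at no extra cost, and vertex-transitivity (homogeneity of all local structure) is exactly what one would hope to exploit. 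But no general principle rounds $\chi_f$ up to $\ceil{\chi_f}$ even for vertex-transitive graphs --- Kneser graphs have a large integrality gap --- and homogeneity alone does not obviously close this. An alternative, equally incomplete, route is the self-strengthening philosophy of this note: a minimal vertex-transitive counterexample, being transitive, either has every vertex in a medium clique, which via the dense-neighborhood machinery of \cite{denseneighborhoods} would force a clique large enough to return us to Regime~2, or has uniformly sparse neighborhoods, where Reed-style probabilistic coloring saves colors --- but making either horn unconditional in this range is precisely the open problem. So I would expect Regimes~1 and~2 to be completable with the tools already developed in this paper, and Regime~3 to be where the genuine work remains.
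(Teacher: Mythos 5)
The statement you were asked to prove is labelled Conjecture~\ref{ReedTransitive} in the paper: it is an open conjecture (Reed's $\omega$, $\Delta$, $\chi$ conjecture restricted to vertex-transitive graphs), and the paper offers no proof of it --- it is only used as a hypothesis in Theorem~\ref{ConjectureaImplyConjecture} and in the closing remark of Section~\ref{BK}. Your proposal is therefore correctly self-diagnosed: Regime~3 is the entire content of the conjecture, and you rightly concede that neither rounding $\chi_f$ (the integrality gap is real) nor the self-strengthening argument closes it. So there is no complete proof here, and none should have been expected.

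That said, your claim that Regimes~1 and~2 are ``completable with the tools already developed in this paper'' is also overstated, in two places. In Regime~1 with $\omega(G)=\Delta(G)-3$ and $\Delta(G)\le 12$, what you call a ``base case'' of finitely many pairs is in fact infinitely many graphs per pair, and proving $\chi\le\Delta-1$ there is exactly the Borodin--Kostochka statement in the range the paper explicitly leaves open (the Main Theorem ``likely holds for all $\Delta\ge 9$'' but is only proved for $\Delta\ge 13$). In Regime~2, the lopsided transversal step you invoke needs a rainbow $\omega$-colouring of disjoint $\omega$-cliques from a palette of size $k=\omega+\ceil{d'/2}$ where $d'=\Delta+1-\omega$ bounds the between-clique degree; since $\omega\ge 2d'$ gives only $k\ge\frac52 d'$, Haxell's unconditional $3\Delta-1$ strong-colouring theorem does not apply (one would need $k\ge 3d'-1$, which fails once $d'\ge 3$), and what you are actually assuming is essentially Conjecture~\ref{StrongTransitive}, the $\frac52\Delta$ strong-colourability conjecture, which is also open. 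This is precisely why the paper's Theorem~\ref{ConjectureaImplyConjecture} is conditional on both conjectures rather than a theorem. Your decomposition is a sensible map of the difficulty, but all three regimes --- not just the third --- contain open problems.
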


\begin{thm}\label{ConjectureaImplyConjecture}
If Conjecture \ref{StrongTransitive} and Conjecture \ref{ReedTransitive} both
hold, then the Main Conjecture does as well.
\end{thm}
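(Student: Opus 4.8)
The plan is to split on the size of $\omega$ relative to $\frac23(\Delta+1)$: when $\omega$ is small, invoke Conjecture~\ref{ReedTransitive}; when $\omega$ is large, invoke the clique‑clustering structure (Lemma~\ref{TransitiveClusteringBigCliques}) together with Conjecture~\ref{StrongTransitive}. Throughout write $N \DefinedAs \max\set{\omega, \ceil{\frac{5\Delta+3}{6}}}$; we must show $\chi(G)\le N$. First reduce to $G$ connected (the components of a vertex‑transitive graph are isomorphic and share $\omega$ and $\Delta$), and dispose of $\Delta\le 1$ trivially; note that for $\Delta\ge 2$ one has $\frac23(\Delta+1) < \frac{5\Delta+3}{6}\le N$, and (a one‑line residue‑mod‑$6$ check) $\floor{\frac{5\Delta+5}{6}} \le \ceil{\frac{5\Delta+3}{6}}$.

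If $3\omega\le 2\Delta+1$, Conjecture~\ref{ReedTransitive} gives $\chi(G)\le\ceil{\frac{\omega+\Delta+1}{2}}$; when $3\omega\le 2\Delta$ this is at most $\frac{5\Delta+3}{6}\le N$ directly, and in the boundary case $3\omega=2\Delta+1$ one has $\Delta\equiv1\pmod 3$, which makes $\frac{5\Delta+3}{6}$ non‑integral and hence $\ceil{\frac{5\Delta+3}{6}}\ge\frac{5\Delta+4}{6}=\frac{\omega+\Delta+1}{2}$, so again $\chi(G)\le N$. So assume $3\omega\ge 2\Delta+2$, i.e.\ $\omega\ge\frac23(\Delta+1)$, and apply Lemma~\ref{TransitiveClusteringBigCliques}. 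If $G$ is a cycle $X_\Q$ blown up by copies of $K_{\omega/2}$, then $\omega=\frac23(\Delta+1)$; a blown‑up even cycle has $\chi(G)=\omega\le N$, while a blown‑up odd cycle $C_{2t+1}$ (necessarily $t\ge 2$, since $t=1$ would force $\omega(G)=\frac32\omega$) is $L(\frac{\omega}{2}C_{2t+1})$, so by Catlin's formula $\chi(G)=\omega+\ceil{\frac{\omega}{2t}}$, which—using that $\omega$ is even and $\Delta=\frac32\omega-1$—one checks is at most $\ceil{\frac{5\Delta+3}{6}}\le N$, with equality when $t=2$.

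The heart of the matter, and the step I expect to be the real obstacle, is the remaining case: $X_\Q$ edgeless. Then the maximum cliques $V_1,\dots,V_r$ are pairwise disjoint and (since $G$ is vertex‑transitive) cover $V(G)$, each of size $\omega$. Let $H$ be the spanning subgraph of $G$ keeping only the edges between distinct $V_i$'s. Since $\mathrm{Aut}(G)$ permutes $\set{V_1,\dots,V_r}$ it preserves $E(H)$, so $H$ is vertex‑transitive; it is $d$‑regular with $d\DefinedAs\Delta-\omega+1$, and $\omega\ge 2d$. By Conjecture~\ref{StrongTransitive}, $s\chi(H)\le\floor{\frac52 d}$; and $\omega\ge 2d$ gives $d\le\frac{\Delta+1}{3}$, so $\floor{\frac52 d}\le\floor{\frac{5\Delta+5}{6}}\le\ceil{\frac{5\Delta+3}{6}}\le N$, i.e.\ $s\chi(H)\le N$. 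I would then pad each clique $V_i$ to size $N$ by adjoining $N-\omega$ new isolated vertices $D_i$, forming $\widetilde H$ on $rN$ vertices; a strong $N$‑colouring of $\widetilde H$ with respect to the partition into the parts $V_i\cup D_i$ is a proper colouring of $\widetilde H$ using each of the $N$ colours exactly once on each part, and restricting it to $V(G)$ makes every $V_i$ rainbow—so the clique edges of $G$ are properly coloured, while the between‑clique edges lie in $H$ and so are properly coloured too. This produces a proper $N$‑colouring of $G$.

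The gap in that last paragraph is that "$s\chi(H)\le N$" as defined only asserts strong $N$‑colourability of $H$ padded to a multiple of $N$ with the \emph{minimum} number of isolated vertices, whereas the partition I want uses exactly $r(N-\omega)$ isolated vertices—more than the minimum once $r$ is large. So I need the lemma that strong $r$‑colourability is preserved under adjoining any multiple of $r$ isolated vertices. Unwinding definitions, this amounts to: given a partition of $V(G)$ into pairwise disjoint sets of size at most $r$ and given that $G$ is strongly $r$‑colourable, produce a proper $r$‑colouring of $G$ that is injective on each part. One cannot simply merge those sets into equal size‑$r$ blocks (a bin‑packing obstruction already appears for $r=3$ with three sets of size $2$), so the proof must redistribute the leftover vertices across blocks cleverly so that every prescribed set ends up inside a block; this is the place where I expect to have to do genuine work (or to locate the statement in the strong‑colouring literature). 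Everything else in the argument is a cited conjecture, a cited structural lemma, or a finite residue computation.
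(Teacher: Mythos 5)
Your decomposition is exactly the paper's: split on $\omega$ versus $\frac23(\Delta+1)$; invoke Conjecture~\ref{ReedTransitive} with a mod~$6$ check in the small-$\omega$ case; invoke Lemma~\ref{TransitiveClusteringBigCliques} in the large-$\omega$ case to split into the blown-up-cycle and edgeless-$X_\Q$ subcases; dispose of the blown-up cycle via Catlin; and in the edgeless case remove the clique edges to get a vertex-transitive graph $H$ of degree $\Delta-\omega+1\le\frac13(\Delta+1)$, apply Conjecture~\ref{StrongTransitive}, and convert the strong colouring of the padded graph into a proper colouring of $G$. Your $H$, $\widetilde H$ are the paper's $G$-minus-clique-edges and $G''$. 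A few small points where you are actually more careful than the paper: you dispose of the blown-up even cycle and rule out a blown-up triangle, whereas the paper jumps straight to "$G$ is obtained from an odd cycle by blowing up" and "is one of Catlin's examples"; and you make explicit why $H$ is vertex-transitive.

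The gap you flag at the end is genuine \emph{but it is present in the paper too, unflagged.} The paper literally writes that Conjecture~\ref{StrongTransitive} "implies that $G''$ is strongly $\ceil{\frac{5\Delta+3}{6}}$-colorable," yet $G''$ is not vertex-transitive (the added clique-filler vertices are isolated, hence have a different degree from the old vertices), so the conjecture cannot be applied to $G''$ directly. The intended reading is that the conjecture is applied to $H$ (the vertex-transitive graph $G$ with clique edges deleted), and then one uses that strong $r$-colourability is preserved on adjoining $r\cdot m$ further isolated vertices beyond the minimal padding in the definition. That auxiliary fact is true and standard, but under the "pad to the nearest multiple of $r$ and partition into parts of size exactly $r$" definition stated in Section~4 of the paper it is not immediate, precisely for the bin-packing reason you give. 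The cleanest fix, for both your write-up and the paper's, is to work with the equivalent formulation of strong $r$-colourability in terms of partitions of $V(G)$ itself into parts of size \emph{at most} $r$ (no dummies at all); under that formulation one applies $s\chi(H)\le N$ directly to the partition of $V(G)$ into the $\omega$-cliques $V_1,\dots,V_r$ and there is nothing to pad. So you have not found a defect specific to your argument: you have found the one step the paper quietly takes for granted, and filling it requires either the "at most $r$" formulation or a short lemma of the kind you describe.
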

\begin{proof}
We may assume that $G$ is connected. Put $\Delta \DefinedAs \Delta(G)$, $\omega
\DefinedAs \omega(G)$ and $\chi \DefinedAs \chi(G)$. Suppose $\omega < \frac23
\parens{\Delta + 1}$.  So, we have $\omega \le \frac{2\Delta + 1}{3}$ and
moreover, when $\Delta \equiv 3 \text{ (mod $6$)}$, we have $\omega \le \frac23
\Delta$.  Plugging the first inequality into Conjecture \ref{ReedTransitive}
gives $\chi \le \ceil{\frac{5\Delta + 4}{6}} = \ceil{\frac{5\Delta + 3}{6}}$
when $\Delta \not \equiv 3 \text{ (mod $6$)}$; by using the improved upper
bound on $\omega$ in the remaining case, we again prove the desired upper bound
on $\chi$.

Now suppose $\omega \ge \frac23 (\Delta + 1)$ and let $\fancy{Q}$ be the set of
maximum cliques in $G$.  Applying Lemma \ref{TransitiveClusteringBigCliques},
either $X_\fancy{Q}$ is edgeless or $G$ is obtained from an odd cycle by
blowing up each vertex to a $K_{\frac{\omega}{2}}$.  In the latter case, $G$ is
one of Catlin's examples from \cite{catlin1979hajos} and the bound holds as
mentioned in the introduction.  Hence we may assume that $X_\fancy{Q}$ is
edgeless; that is, $V(G)$ can be partitioned into $\omega(G)$-cliques.

Suppose $\chi > \omega$. Now we show that Conjecture \ref{StrongTransitive}
implies the Main Conjecture.
Form $G'$ from $G$ by adding vertices to the maximum cliques of $G$ until they
all have $\ceil{\frac{5\Delta + 3}{6}}$ vertices; each new vertex has no edges
outside its clique, and $\Delta$ always denotes the maximum degree in $G$, not
in $G'$. 
Now form $G''$ from $G'$ by removing all edges within each maximum clique.  
Each vertex now has at most $\Delta + 1 - \omega \le
\frac13 (\Delta + 1)$ neighbors in $G'$ outside of its clique, 
hence the maximum
degree of $G''$ is at most $\frac13(\Delta+1)$.  
Since $\ceil{\frac{5\Delta + 3}{6}} \ge
\frac52 \parens{\frac13 (\Delta + 1)}$, Conjecture \ref{StrongTransitive}
implies that $G''$ is strongly 
$\ceil{\frac{5\Delta + 3}{6}}$-colorable.
By taking the $V_i$'s of $G''$ to be the vertex sets of the maximum cliques in
$G'$, we see that $G'$ is $\ceil{\frac{5\Delta + 3}{6}}$-colorable, and hence
so is $G$.
\end{proof}

Reed \cite{reed1998omega} has shown that there is $0 < \epsilon < 1$ such that
every graph satisfies $\chi \le \epsilon\omega + (1-\epsilon)(\Delta+1)$; for a
shorter and simpler proof, see~\cite{KingReed2012}.  Combining this upper bound
with Haxell's $3\Delta - 1$ strong colorability result, we get the following
similarly to Theorem \ref{ConjectureaImplyConjecture}.

\begin{thm}
There exists $c < 1$, such that for any vertex-transitive graph $G$, we have $\chi(G) \le \max \set{\omega(G), c(\Delta(G) + 1)}$.
\end{thm}

\section{Borodin-Kostochka for vertex-transitive graphs}
\label{BK}
In \cite{bigcliques}, we proved the following.
\begin{thm}\label{BigCliquesExist}
If $G$ is a graph with $\Delta(G) \ge 13$ and $K_{\Delta(G) - 3} \not \subseteq G$, then $\chi(G) \le \Delta(G) - 1$.
\end{thm}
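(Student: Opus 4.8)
This is a Borodin--Kostochka-type statement, so I would argue by contradiction. Let $G$ be a counterexample with the fewest vertices. Since $G$ has no $K_{\Delta(G)-3}$ it is not complete, and since $\Delta(G)\ge 13$ no component of it is an odd cycle, so Brooks' theorem gives $\chi(G)\le\Delta(G)$; combined with $\chi(G)\ge\Delta(G)$ this yields $\chi(G)=\Delta(G)=:\Delta$. A short argument shows $G$ is $\Delta$-vertex-critical: if $G-v$ had the same chromatic number it would either be a smaller counterexample or, by Brooks, be complete and hence contain $K_{\Delta}\supseteq K_{\Delta-3}$, both impossible. In particular $\delta(G)\ge\Delta-1$, so every vertex is \emph{low} (degree $\Delta-1$) or \emph{high} (degree $\Delta$). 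The goal becomes to find a $K_{\Delta-3}$ in $G$, i.e.\ to show every vertex lies in a clique of size at least $\Delta-3$.

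The engine is a family of \emph{reducibility} lemmas asserting that various local configurations are forbidden in a vertex-critical graph with $\chi=\Delta$. The prototype is: delete a small set $S$ (a vertex together with a few neighbors), $(\Delta-1)$-color $G-S$ by minimality, and try to extend; extension is a list-coloring problem on $G[S]$ in which each vertex $u$ has at least $d_{G[S]}(u)$ colors available, minus a correction for its colored neighbors outside $S$. By the classification of $d_0$-choosable graphs (a connected graph is not $d_0$-choosable iff every block is a clique or an odd cycle), together with its refinement for lists of size one below the degree, extension can fail only when $G[N(v)]$ is very close to complete, which with $d(v)\ge\Delta-1$ already forces a clique of size close to $\Delta$. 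The real content is carrying this out sharply enough to reach $K_{\Delta-3}$ rather than some weaker bound.

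I would organize the proof as: (1) the reductions above; (2) prove the local reducibility lemmas, ruling out \emph{sparse} neighborhoods and small low-degree separating configurations, via the $d_0$- and $d_1$-choosability classifications and Kempe-chain exchanges; (3) invoke Gallai's theorem to pin down the subgraph induced by the low vertices -- it is a Gallai forest, whose blocks are cliques and odd cycles -- and combine it with step (2) to put every low vertex in a clique of size $\ge\Delta-3$; (4) handle the high vertices using Kempe chains together with a Mozhan-type partition of $V(G)$ into parts each inducing a subgraph of small maximum degree, which can be greedily list-colored unless each part is a near-clique, again forcing a large clique through every high vertex; (5) assemble a $K_{\Delta-3}$, a contradiction.

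The main obstacle is steps (2)--(3): pushing the clique bound all the way down to $\Delta-3$ (rather than the comparatively easy $\Delta-1$ or $\Delta-2$) forces an analysis of neighborhoods that are two or three non-edges short of complete, where the choosability classifications sprout several exceptional small families -- complete multipartite graphs, odd cycles joined to cliques, and similar -- each of which must be excluded by its own recoloring or counting argument. This is exactly where the hypothesis $\Delta\ge 13$ (rather than the conjectural $\Delta\ge 9$) is spent: it leaves just enough slack in the list sizes arising after the partial colorings for the final counting to close.
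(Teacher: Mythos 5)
First, a point of reference: this paper does not actually prove Theorem \ref{BigCliquesExist}. It is imported as a black box from the authors' earlier work \cite{bigcliques} (``In \cite{bigcliques}, we proved the following''), so there is no in-paper proof to compare against. Measured against the proof in that cited paper, your outline identifies the correct architecture: minimal counterexample, reduction to a $\Delta$-critical graph with $\delta \ge \Delta-1$, the low/high vertex dichotomy, reducible configurations via the classifications of non-$d_0$- and non-$d_1$-choosable graphs, Gallai's theorem for the subgraph induced by low vertices, and a Mozhan-type partition with Kempe exchanges for the high vertices. That is genuinely the right roadmap, and your opening reductions (Brooks forces $\chi = \Delta$; criticality; minimum degree $\Delta-1$) are correct as stated.

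However, what you have written is a plan rather than a proof, and every step carrying real mathematical weight is deferred. Concretely: (i) the classification of non-$d_1$-choosable graphs that you invoke in step (2) is itself a long theorem with many exceptional families, and you neither state it nor explain how its exceptional families translate into the specific forbidden configurations you need; (ii) the reducibility lemmas are never formulated --- ``extension can fail only when $G[N(v)]$ is very close to complete'' needs a quantitative version (how many missing edges, and with what list sizes after the partial coloring), since the gap between concluding $K_{\Delta-1}$ and concluding $K_{\Delta-3}$ lives entirely in that quantification; (iii) step (4), the Mozhan partition handling high vertices, is the technical core of \cite{bigcliques} and you give no indication of how the parts are constructed, what degree bound they satisfy, or how the swapping argument terminates; and (iv) you never exhibit the inequalities in which $\Delta \ge 13$ is actually spent. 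You candidly flag these omissions yourself, which is to your credit, but it means the proposal establishes the theorem only modulo the entire difficult part of the argument. To turn this into a proof you would essentially have to reproduce \cite{bigcliques}; as a standalone submission it is an accurate summary of that paper's strategy, not a proof of the theorem.
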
 

In \cite{denseneighborhoods}, the second author proved the following.
\begin{thm}\label{DenseNeighbors}
If $G$ is a graph with $\Delta(G) \ge 9$ and $K_{\Delta(G)} \not \subseteq G$ such that every vertex is in a clique on $\frac23 \Delta(G) + 2$ vertices, then $\chi(G) \le \Delta(G) - 1$.
\end{thm}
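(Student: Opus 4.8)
The plan is to argue directly, building a $(\Delta-1)$-coloring of any $G$ satisfying the hypotheses; write $\Delta \DefinedAs \Delta(G)$ and $t \DefinedAs \ceil{\frac23\Delta + 2}$, so that every vertex lies in a $t$-clique. The key initial observation is that $2t \ge \frac43\Delta + 4 > \Delta + 1$, so any two $t$-cliques sharing a vertex $v$ both lie in $N[v]$ and hence overlap in at least $2t - (\Delta+1) \ge \frac13\Delta + 3$ vertices. Consequently, letting $\Q$ be the set of all $t$-cliques and $X_\Q$ its intersection graph, distinct components of $X_\Q$ have disjoint vertex-unions, so the induced subgraphs $G_C \DefinedAs G\brackets{\bigcup_{Q \in C} Q}$, over the components $C$ of $X_\Q$, partition $V(G)$ into pieces each of which contains a $t$-clique.

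Next I would show that each cluster $G_C$ is small and nearly complete by running the induction behind Lemma~\ref{KostochkaCliqueGraph} with $t$-cliques in place of maximum cliques: the overlap bound $\card{Q_1 \cap Q_2} \ge 2t - \Delta - 1$ together with a Hajnal-type estimate $\card{\bigcup C'} + \card{\bigcap C'} \ge 2t$ for connected subfamilies $C'$ forces $\bigcap C \ne \emptyset$. Fixing $z_C \in \bigcap C$ gives $G_C \subseteq N[z_C]$, so $\card{G_C} \le \Delta + 1$; moreover no vertex outside $G_C$ can be complete to a $t$-clique of $C$, since it would then lie in such a $t$-clique and hence in $G_C$. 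Let $q \DefinedAs \omega(G_C) \ge t$ and fix a maximum clique $Q_C$ of $G_C$; then $\card{G_C} - q \le \Delta + 1 - t \le \frac13\Delta - 1$ and no vertex of $G - Q_C$ is complete to $Q_C$. A short argument with complements and matchings now shows $\chi(G_C) \le \Delta - 1$: this is immediate from $K_\Delta \not\subseteq G$ if $\card{G_C} \le \Delta$, while if $\card{G_C} = \Delta + 1$ then, since $K_\Delta \not\subseteq G_C$, the complement of $G_C$ is not a star and, having a vertex cover of size at most $\frac13\Delta - 1$, contains two disjoint edges or a triangle; in either case $\overline{G_C}$ has a clique cover of size at most $\card{G_C} - 2 = \Delta - 1$.

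I would then color $G$ one cluster at a time, in a suitable order, each time extending the coloring to the next cluster $G_C$. When $G_C$ is processed, a vertex $u \in V(G_C)$ has at most $\Delta - t + 1 \le \frac13\Delta - 1$ already-colored neighbors outside $G_C$; sharper, a vertex $x$ of the core clique $Q_C$ has at most $\Delta - (q - 1)$ colored neighbors off $Q_C$, hence keeps a list $L(x)$ of at least $q - 2$ of the $\Delta - 1$ colors. First color the at most $\frac13\Delta - 1$ vertices of $V(G_C) \setminus Q_C$ greedily — each has fewer than $\Delta - 1$ colored neighbors at that moment — and then finish by choosing a system of distinct representatives from the lists $L(x)$, $x \in Q_C$.

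The hard part is exactly this last step, because lists of size $q - 2$ on a $q$-clique need not by themselves admit a system of distinct representatives. Here one exploits that the outside constraints on $Q_C$ are spread out: since no vertex off $Q_C$ is complete to $Q_C$, a failure of Hall's condition for a subfamily $S \subseteq Q_C$ forces $\card{S} \ge q - 1$, and chasing the inequalities pins down $G_C$ and its links to the rest of $G$ to a rigid near-extremal configuration — the core clique seeing a pot of fewer than $q$ usable colors. That rigid case must be broken by recoloring inside $G_C$ or inside a previously handled cluster (a Small-Pot-Lemma style argument), and it is precisely here that $\Delta \ge 9$ and the constant $2$ in $\frac23\Delta + 2$ enter; the $C_5$-blowups, which guarantee only $\frac23(\Delta+1)$-cliques, show that no weaker clique hypothesis can work. (When $\Delta \ge 13$ one also has the shortcut of halting as soon as the clustering produces a clique on $\Delta - 3$ vertices and invoking Theorem~\ref{BigCliquesExist}.)
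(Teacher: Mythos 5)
This theorem is not proved in the present paper: it is imported verbatim from \cite{denseneighborhoods} ("In \cite{denseneighborhoods}, the second author proved the following"), so there is no in-paper argument to compare yours against. Judged on its own, your sketch has two genuine gaps.

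First, the structural step is unsound. Hajnal's inequality (Lemma \ref{HajnalLemma}) and Kostochka's Lemma \ref{KostochkaCliqueGraph} are statements about \emph{maximum} cliques, and they fail for arbitrary $t$-cliques. Your hypothesis only places every vertex in a clique of size $t=\ceil{\frac23\Delta+2}$, while $\omega$ may be as large as $\Delta-1$, so the cliques you cluster need not be maximum (and not every vertex need lie in a maximum clique, so you cannot simply switch to those). Concretely, for $\Delta=12$ we have $t=10$, and if $G$ contains a $K_{11}$ then the eleven $10$-subsets of that $K_{11}$ form a connected family of $t$-cliques with empty intersection; more generally, once $\Delta$ is large, two intersections $A\cap B$ and $A\cap C$ of size $2t-\Delta-1$ can be disjoint inside a clique $A$ of size close to $\Delta-1$. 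So the claimed Hajnal-type estimate is false, $\bigcap C\ne\emptyset$ does not follow, and the partition into clusters $G_C\subseteq N[z_C]$ is not established.

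Second, even granting the clustering, the heart of the proof is exactly the part you defer. You correctly compute that a Hall violator $S\subseteq Q_C$ must have $\card{S}\ge q-1$ and that this forces a rigid configuration, but ``that rigid case must be broken by recoloring inside $G_C$ or inside a previously handled cluster'' is the entire difficulty: the recoloring can cascade into earlier clusters, the processing order matters, and this is precisely where $\Delta\ge 9$ and the ``$+2$'' must be used quantitatively (as you note, the $C_5$-blowups leave essentially no slack). As written this is a plan, not a proof. It is telling that the present paper, when it proves its Main Theorem for vertex-transitive graphs, avoids the SDR/recoloring step entirely by invoking Haxell's $3\Delta-1$ strong-colorability theorem to produce the needed transversals; some such independent-transversal or choosability machinery is what actually closes the argument in \cite{denseneighborhoods}, and your sketch does not supply a substitute for it.
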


By combining these theorems, we immediately get that the Borodin-Kostochka
conjecture holds for vertex-transitive graphs with $\Delta \ge 15$.  We can
improve this result using Lemma \ref{TransitiveClusteringBigCliques} and
Haxell's $3\Delta - 1$ strong colorability result.

\begin{mainthm}\label{BKTransitive}
If $G$ is vertex-transitive with $\Delta(G) \ge 13$ and $K_{\Delta(G)} \not \subseteq G$, then $\chi(G) \le \Delta(G) - 1$.
\end{mainthm}
\begin{proof}
Suppose that $\chi(G)\ge \Delta(G)$.
By Lemma \ref{BigCliquesExist}, we have $\omega(G) \ge \Delta(G) - 3 > \frac23
(\Delta(G) + 1)$ since $\Delta(G) \ge 13$.  Now Lemma
\ref{TransitiveClusteringBigCliques} shows that $X_{\Q}$ is edgeless, where
$\Q$ is the collection of all maximum cliques in $G$.  

Form $G'$ from $G$ by adding vertices to the
maximum cliques of $G$ until they all have $\Delta(G) - 1$ vertices, where each
new vertex has no edges outside its clique.  Each vertex has at most $\Delta(G) + 1 - \omega(G) \le 4$ neighbors outside its clique.  Since $\Delta(G) - 1 = 12 \ge 3*4 - 1$, Haxell's $3\Delta - 1$ strong colorability result implies that $G'$ is $\parens{\Delta(G) - 1}$-colorable and hence so is $G$.
\end{proof}

If Conjecture \ref{ReedTransitive} holds, then we get $\omega \ge \Delta - 2 > \frac23 (\Delta + 1)$ when $\Delta \ge 9$.  So, since $\Delta + 1 - \omega \le 3$, the above argument works for $\Delta \ge 9$.  That is, Conjecture \ref{ReedTransitive} by itself implies the Borodin-Kostochka conjecture for vertex-transitive graphs.

\bibliographystyle{amsplain}
\bibliography{GraphColoring}
\end{document}